\definecolor{webgreen}{rgb}{0,.5,0}
\definecolor{webbrown}{rgb}{.6,0,0}
\newcommand{\seqnum}[1]{\href{http://oeis.org/#1}{\underline{#1}}}
\newcommand{\goesto}{\rightarrow} 
\newcommand{\nn}{\mathbb{N}}
\newcommand{\wo}{\backslash} 
\newcommand{\set}[1]{\{#1\}}
\newcommand{\sd}{\,|\,} 
\newcommand{\abs}[1]{\bigl\lvert #1\bigr\rvert}
\newcommand{\divides}[2]{#1\mid#2}
\newcommand{\z}{\mathbb{Z}} 
\DeclareMathOperator{\Exp}{E} 
\begin{document}

\begin{center}
\epsfxsize=4in
\end{center}

\theoremstyle{plain}
\newtheorem{theorem}{Theorem}
\newtheorem{corollary}[theorem]{Corollary}
\newtheorem{lemma}[theorem]{Lemma}
\newtheorem{proposition}[theorem]{Proposition}

\theoremstyle{definition}
\newtheorem{definition}[theorem]{Definition}
\newtheorem{example}[theorem]{Example}
\newtheorem{conjecture}[theorem]{Conjecture}

\theoremstyle{remark}
\newtheorem{remark}[theorem]{Remark}

\begin{center}
\vskip 1cm{\LARGE\bf
Some Elementary Congruences for 
\vskip .11in
the Number of Weighted Integer
  Compositions  
}
\vskip 1cm
\large
Steffen Eger \\
Computer Science Department \\
Goethe University Frankfurt am Main\\
60325 Frankfurt am Main\\
Germany \\
\href{mailto:eger.steffen@gmail.com}{\tt eger.steffen@gmail.com} \\
\ \\
\end{center}

\vskip .2 in

\begin{abstract}
An integer composition of a nonnegative integer $n$ is a tuple
$(\pi_1,\ldots,\pi_k)$ of nonnegative integers whose sum is $n$; the
$\pi_i$'s are 
called the \emph{parts} of the
composition. For fixed number $k$ of parts, the number of
\emph{$f$-weighted} 
integer compositions (also called \emph{$f$-colored integer
  compositions} in
the literature), in which each part size $s$ may occur in $f(s)$
different colors, is given by the \emph{extended binomial coefficient}
$\binom{k}{n}_{f}$. We derive several congruence properties for
$\binom{k}{n}_{f}$, most of which are analogous to those for ordinary
binomial coefficients. Among them is the parity of $\binom{k}{n}_{f}$,
Babbage's congruence, Lucas' theorem, etc. We also give congruences
for $c_{f}(n)$, the number of $f$-weighted integer compositions with
arbitrarily many parts, and for extended binomial coefficient sums.  
We close with an application of our results to prime criteria for
weighted integer compositions. 
\end{abstract}

\section{Introduction}
An \emph{integer composition} (ordered partition) of a nonnegative
integer $n$ is a tuple 
$(\pi_1,\ldots,\pi_k)$ of nonnegative integers whose sum is $n$; the
$\pi_i$'s are called the \emph{parts} of the composition. We
call an integer composition of $n$ \emph{$f$-weighted}, for a
function $f:\nn\goesto\nn$, whereby $\nn$ denotes the set of
nonnegative integers, if each {part size} $s\in\nn$ may occur in
$f(s)$ different colors in the composition. 
If $f$ is the
indicator
function of a subset $A\subseteq\nn$, this yields the
so-called \emph{$A$-restricted integer compositions}
\cite{Heubach:2004};\footnote{In particular, if $f$ is the
  indicator function of the nonnegative integers, then this yields the
so-called \emph{weak compositions} and if $f$ is the indicator
function of the positive integers, this yields the ordinary integer
compositions.} if
$f(s)=s$, this yields the so-called \emph{$s$-colored compositions}
\cite{Agarwal:2000}. 

To illustrate, let $f(1)=f(2)=f(3)=1$ and $f(9)=3$, and let
$f(s)=0$, for all $s\in\nn\wo\set{1,2,3,9}$. Then, there are $4!\cdot
3+4\cdot 3=84$ different $f$-weighted integer compositions of $n=15$ with
exactly $k=4$ parts, among them, 
\begin{align*}
  (1,3,2,9^1),(1,3,2,9^2),(1,3,2,9^3), 
\end{align*}
where we superscript the different 
colors of part size $9$. 
Obviously, $k=4$ divides $4!\cdot 3+4\cdot 3$, and
this is not coincidental and does not
depend upon $f$, as we will show. 
More generally, we derive several
divisibility properties of the number of $f$-weighted integer
compositions.  
First, 
after reviewing some introductory 
background regarding weighted integer compositions, their relations to
extended binomial coefficients, and elementary properties of weighted
integer compositions in Section \ref{sec:fixedk},  
we consider
divisibility properties for 
$f$-weighted integer compositions 
with a fixed number $k$ of parts in Section \ref{sec:fixed}. 
Then, in Section \ref{sec:arbitrary}, we combine several known
results to derive divisibility properties for the number of
$f$-weighted integer compositions of $n$ with arbitrarily many parts. 
In the same section, we also specify divisibility properties for
extended binomial coefficient sums. Lastly, in Section
\ref{sec:applications}, we close with an application of
our results to prime criteria for  
weighted integer compositions.

To place our work in some context, we note that 
there is a large body of recent results on integer compositions. To
name just a few examples, Heubach and Mansour \cite{Heubach:2004}
investigate 
generating functions for the so-called $A$-restricted compositions;
Sagan \cite{Sagan:2009} considers doubly restricted integer
compositions; 
Agarwal \cite{Agarwal:2000}, Narang and Agarwal \cite{Narang:2008},
Guo \cite{Guo:2012}, Hopkins \cite{Hopkins:2012}, 
Shapcott \cite{Shapcott:2012,Shapcott:2013}, 
and Mansour and Shattuck 
\cite{Mansour:Shattuck:2014} study results for $s$-colored integer
compositions. Mansour, Shattuck, and Wilson \cite{Mansour:Shattuck:Wilson:2014}, Munagi \cite{Munagi:2012}, 
and Munagi and Sellers \cite{Munagi:toappear} count the number 
of compositions of an integer in which (adjacent) parts satisfy 
congruence relationships. Probabilistic results for (restricted) integer
compositions are provided in Ratsaby \cite{Ratsaby:2008}, Neuschel
\cite{Neuschel:2014}, and in Banderier and Hitczenko
\cite{Banderier:2012}, among many others.  
Mihoubi \cite{Mihoubi:2009} studies congruences for the partial Bell polynomials,
which may be considered special cases of weighted integer
compositions \cite{Eger:2015}. 
Classical results on weighted
integer compositions are, for example, provided in Hoggatt and Lind
\cite{Hoggatt:1969} and some congruence relationships for classical extended
binomial coefficients are given, e.g., in Bollinger and Burchard
\cite{Bollinger:1990} and 
Bodarenko \cite{Bodarenko:1990}. 

\section{Number of $f$-weighted integer compositions
  with fixed number of parts}\label{sec:fixedk} 
For $k\ge 0$ and $n\ge 0$, consider the coefficient of $x^n$ of the
polynomial or power series 
\begin{align}\label{eq:power}
  \Bigl(\sum_{s\in\nn} f(s)x^s\Bigr)^k,
\end{align}
and denote it by $\binom{k}{n}_{f}$. Our first theorem states that $\binom{k}{n}_{f}$ denotes the combinatorial object we are investigating in this work, $f$-weighted integer compositions. 

\begin{theorem}\label{theorem:main}
  The number $\binom{k}{n}_{f}$ denotes the number of $f$-weighted integer compositions of $n$ with $k$ parts. 
\end{theorem}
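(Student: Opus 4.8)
The plan is to expand the power in \eqref{eq:power} directly as a product of $k$ identical factors and interpret the coefficient of $x^n$ combinatorially. Writing
\begin{align*}
  \Bigl(\sum_{s\in\nn} f(s)x^s\Bigr)^k
  = \Bigl(\sum_{s_1\in\nn} f(s_1)x^{s_1}\Bigr)\cdots\Bigl(\sum_{s_k\in\nn} f(s_k)x^{s_k}\Bigr),
\end{align*}
distributing the product yields a sum over all tuples $(s_1,\ldots,s_k)\in\nn^k$ of the terms $f(s_1)\cdots f(s_k)\,x^{s_1+\cdots+s_k}$. Collecting the coefficient of $x^n$ then gives
\begin{align*}
  \binom{k}{n}_{f} = \sum_{\substack{(s_1,\ldots,s_k)\in\nn^k\\ s_1+\cdots+s_k=n}} f(s_1)\cdots f(s_k),
\end{align*}
so the whole statement reduces to identifying this sum with the count of $f$-weighted integer compositions of $n$ with $k$ parts.

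The key step is the combinatorial interpretation of a single summand. A tuple $(s_1,\ldots,s_k)$ with $s_1+\cdots+s_k=n$ is precisely an integer composition of $n$ with $k$ parts, the $i$th part having size $s_i$. For each such composition, the factor $f(s_i)$ counts the number of colors available for the part of size $s_i$, so by the multiplication principle the product $f(s_1)\cdots f(s_k)$ counts exactly the number of ways to assign an admissible color to every part independently, i.e.\ the number of distinct $f$-weighted compositions having that fixed part-size tuple as underlying uncolored composition. Summing over all part-size tuples therefore counts every $f$-weighted integer composition of $n$ with $k$ parts exactly once, which is the assertion.

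The only point requiring care is that the manipulation of the (possibly infinite) power series is legitimate. Since $f$ maps into $\nn$ and we extract a single coefficient, for fixed $n$ only part sizes $s\le n$ can contribute, so each coefficient $\binom{k}{n}_{f}$ is a finite sum of nonnegative integers and the formal rearrangement above involves no convergence issues; I would remark on this briefly rather than invoke any analytic machinery. I do not anticipate a genuine obstacle here—the result is essentially a definitional unpacking—so the main thing to get right is stating the bijection between colored part-size tuples and colored compositions cleanly, so that the multiplication principle applies term by term.
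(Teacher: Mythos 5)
Your proposal is correct and follows essentially the same route as the paper: expand the $k$-fold product, identify the coefficient of $x^n$ as $\sum_{\pi_1+\cdots+\pi_k=n}f(\pi_1)\cdots f(\pi_k)$, and read off each summand as the number of colorings of a fixed composition. You merely spell out the multiplication-principle step and the finiteness of the coefficient extraction, which the paper leaves implicit.
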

\begin{proof}
  Collecting terms in \eqref{eq:power}, we see that $[x^n]g(x)$, for $g(x)= (\sum_{s\in\nn} f(s)x^s)^k$,  is given as
  \begin{align}\label{eq:comp}
    \sum_{\pi_1+\cdots+\pi_k=n}f(\pi_1)\cdots f(\pi_k),
  \end{align}
  where the sum is over all nonnegative integer solutions to
  $\pi_1+\dotsc+\pi_k=n$. This proves the theorem. 
\end{proof}
Theorem \ref{theorem:main} has appeared, for example, in Shapcott
\cite{Shapcott:2012}, Eger \cite{Eger:2013}, or, much earlier, in Hoggatt and Lind \cite{Hoggatt:1969}.   
Note that $\binom{k}{n}_{f}$, which has also been referred to as
\emph{extended binomial coefficient} in the literature 
\cite{Fahssi:2012}, 
generalizes many interesting combinatorial objects, such as the
binomial coefficients (for $f(0)=f(1)=1$ and $f(s)=0$, for $s>1$) \seqnum{A007318},
trinomial coefficients \seqnum{A027907}, etc. 

We now list four relevant properties of the $f$-weighted integer compositions,
which we will make use of in the proofs of congruence properties later
on.  Throughout this work, we will denote the ordinary binomial
coefficients, i.e., when $f(0)=f(1)=1$ and $f(s)=0$ for all $s>1$, by
the standard notation
$\binom{k}{n}$. 
\begin{theorem}[Properties of $f$-weighted integer
    compositions]\label{prop:convolution} 
  Let $k,n\ge 0$. Then, the following hold true:
  \begin{align}
    \label{eq:repr}
    \binom{k}{n}_f\: &\:= \sum_{\substack{k_0+\cdots+k_n=k\\0\cdot
        k_0+\cdots n\cdot
        k_n=n}}\binom{k}{k_0,\ldots,k_n}\prod_{i=0}^nf(i)^{k_i}\\
    \label{eq:vandermonde}
    \binom{k}{n}_{f}\: &\:=
    \sum_{\mu_1+\cdots+\mu_r=n}\binom{k_1}{\mu_1}_{f}\binom{k_2}{\mu_2}_{f}\cdots\binom{k_r}{\mu_r}_f
\\
\label{eq:absorption}
\binom{k}{n}_{f}\: &\:=
\frac{k}{in}\sum_{s\in\nn}s\binom{i}{s}_{f}\binom{k-i}{n-s}_{f}\\
\label{eq:rec}
\binom{k}{n}_{f}\: &\:=
    \sum_{i\in\nn}f(\ell)^i\binom{k}{i}\binom{k-i}{n-\ell i}_{f_{|f(\ell)=0}}
  \end{align}
  In \eqref{eq:repr}, the sum is over all solutions in nonnegative
  integers $k_0,\ldots,k_n$ of $k_0+\cdots+k_n=n$ and $0\cdot k_0+\cdots+nk_n=n$, and 
  $\binom{k}{k_0,\ldots,k_n}=\frac{k!}{k_0!\cdots k_n!}$ denote the
  multinomial coefficients. In \eqref{eq:vandermonde}, which is also
  sometimes called \emph{Vandermonde convolution} 
  \cite{Fahssi:2012}, 
  the sum is over
  all solutions in nonnegative integers $\mu_1,\ldots,\mu_r$ of
  $\mu_1+\cdots+\mu_r=n$, and the relationship holds for any fixed
  composition $(k_1,\ldots,k_r)$ of $k$, for $r\ge 1$. In
  \eqref{eq:absorption}, $i$ is an integer satisfying $0<i\le k$. In
  \eqref{eq:rec}, $\ell\in\nn$ and by $f_{|{f(\ell)=0}}$ we
  denote the function $g:\nn\goesto\nn$ for which $g(s)=f(s)$, for
  all $s\neq \ell$, and $g(\ell)=0$. 
\end{theorem}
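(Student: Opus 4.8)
The plan is to treat all four identities as statements about the single generating function $g(x)=\sum_{s\in\nn}f(s)x^s$, for which \eqref{eq:power} gives $\binom{k}{n}_f=[x^n]g(x)^k$. Identity \eqref{eq:repr} is then just the multinomial theorem: expanding $g(x)^k=\bigl(\sum_{i\ge 0}f(i)x^i\bigr)^k$ and recording, for each term, how many of the $k$ factors contribute each power $x^i$, one collects the coefficient of $x^n$. If $k_i$ of the factors contribute $x^i$, the constraints $k_0+\cdots+k_n=k$ and $0\cdot k_0+\cdots+n\cdot k_n=n$ are exactly the conditions that $k$ factors are used and that the total degree is $n$; powers above $x^n$ cannot contribute, so the index may be truncated at $i=n$. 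The multinomial coefficient $\binom{k}{k_0,\ldots,k_n}$ counts the arrangements and $\prod_i f(i)^{k_i}$ collects the product of coefficients.

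For \eqref{eq:vandermonde} I would use that a composition $(k_1,\ldots,k_r)$ of $k$ factors the power series as $g(x)^k=g(x)^{k_1}\cdots g(x)^{k_r}$. Extracting $[x^n]$ from a product of power series is the Cauchy product, so $[x^n]\prod_j g(x)^{k_j}=\sum_{\mu_1+\cdots+\mu_r=n}\prod_j [x^{\mu_j}]g(x)^{k_j}$, which is the asserted convolution once each $[x^{\mu_j}]g(x)^{k_j}$ is rewritten as $\binom{k_j}{\mu_j}_f$. The recurrence \eqref{eq:rec} is similar but splits off one part size rather than factoring: writing $\tilde g(x)=\sum_{s\in\nn}f_{|f(\ell)=0}(s)\,x^s$ one has $g(x)=f(\ell)x^\ell+\tilde g(x)$, and the ordinary binomial theorem gives $g(x)^k=\sum_{i=0}^k\binom{k}{i}f(\ell)^i x^{\ell i}\,\tilde g(x)^{k-i}$; taking $[x^n]$ and reading off $[x^{n-\ell i}]\tilde g(x)^{k-i}=\binom{k-i}{n-\ell i}_{f_{|f(\ell)=0}}$ yields \eqref{eq:rec}, with the binomial factor vanishing for $i>k$ so that extending the index to all of $\nn$ is harmless.

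The one step that needs a genuine idea is the absorption identity \eqref{eq:absorption}, and I expect it to be the main obstacle. The trick I would use is that the Euler operator $x\frac{d}{dx}$ multiplies the coefficient of $x^n$ by $n$, so that $n\binom{k}{n}_f=[x^n]\,x\frac{d}{dx}g(x)^k$. By the chain rule, $x\frac{d}{dx}g(x)^i=i\,x\,g(x)^{i-1}g'(x)$, and its coefficient of $x^s$ is precisely $s\binom{i}{s}_f$; hence the inner sum on the right of \eqref{eq:absorption} is the Cauchy product $[x^n]\bigl(x\frac{d}{dx}g(x)^i\bigr)g(x)^{k-i}$. The key cancellation is then $\bigl(x\frac{d}{dx}g^i\bigr)g^{k-i}=i\,x\,g^{i-1}g'\cdot g^{k-i}=i\,x\,g^{k-1}g'=\frac{i}{k}\,x\frac{d}{dx}g^k$, whose coefficient of $x^n$ equals $\frac{i}{k}\,n\binom{k}{n}_f$. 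Equating the two expressions for this coefficient gives $\sum_{s}s\binom{i}{s}_f\binom{k-i}{n-s}_f=\frac{in}{k}\binom{k}{n}_f$, and solving for $\binom{k}{n}_f$ produces exactly \eqref{eq:absorption}. The rearrangement is legitimate for $n\ge 1$, which is in any case required for the prefactor $\frac{k}{in}$ to be defined.
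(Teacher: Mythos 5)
Your proof is correct, and on the one identity where the paper does genuine work --- the absorption identity \eqref{eq:absorption} --- you take a genuinely different route. The paper normalizes $f$ into a probability distribution (assuming, ``wlog,'' that $\sum_{s'}f(s')$ is finite), realizes $\binom{k}{n}_f$ as the law of a sum $S_k=X_1+\cdots+X_k$ of i.i.d.\ variables, and equates two evaluations of the conditional expectation $\Exp[S_i\sd S_k=n]$: it equals $\frac{n}{k}i$ by exchangeability, and equals $\sum_s s\,P[S_i=s]P[S_{k-i}=n-s]/P[S_k=n]$ by definition. Your Euler-operator computation, resting on $\bigl(x\frac{d}{dx}g^i\bigr)g^{k-i}=\frac{i}{k}\,x\frac{d}{dx}g^k$, establishes exactly the same convolution identity $\sum_s s\binom{i}{s}_f\binom{k-i}{n-s}_f=\frac{in}{k}\binom{k}{n}_f$, but entirely within formal power series: it needs no summability hypothesis on $f$ and no normalization, and it makes the whole theorem a single uniform generating-function argument. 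For \eqref{eq:repr} your multinomial-theorem expansion is essentially the argument the paper sketches (regrouping the composition sum \eqref{eq:comp} according to how many parts take each value), while for \eqref{eq:vandermonde} and \eqref{eq:rec} you actually supply proofs --- the factorization $g^k=g^{k_1}\cdots g^{k_r}$ with the Cauchy product, and the split $g=f(\ell)x^\ell+\tilde g$ with the binomial theorem --- where the paper only cites the literature. Both approaches to \eqref{eq:absorption} require $n\ge 1$ and $0<i\le k$ for the final division, which you note and which the statement's hypotheses guarantee.
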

\begin{proof}
  \eqref{eq:repr} follows from rewriting the sum in
  \eqref{eq:comp} as a summation over integer partitions rather than
  over integer compositions and then adjusting the factors in the sum.  
  \eqref{eq:vandermonde} and \eqref{eq:rec}
  have straightforward combinatorial
  interpretations, and proofs can be found in Fahssi \cite{Fahssi:2012} and Eger
  \cite{Eger:2013}. For a proof of \eqref{eq:absorption}, note first that
  $\binom{k}{n}_f$ also represents the distribution of the sum of
  i.i.d.\ nonnegative integer-valued  
  random variables $X_1,\ldots,X_k$. Namely, let
  $P[X_i=s]=\frac{f(s)}{\sum_{s'\in\nn}f(s')}$ (wlog, we may assume
  $\sum_{s'\in\nn}f(s')$ to be finite). Then, using \eqref{eq:comp},
  \begin{align*}
    P[X_1+\cdots+X_k=n] = 
    \sum_{\pi_1+\cdots+\pi_k=n}P[X_1=\pi_1]\cdots P[X_k=\pi_k] = \left(\frac{1}{\sum_{s'\in\nn}f(s')}\right)^k\binom{k}{n}_{f}.
  \end{align*}
  Thus, it suffices to prove
  \eqref{eq:absorption} for sums of random variables. For $0<i\le k$, let
  $S_i$ denote the partial sum $X_1+\cdots+X_i$. Then, consider the
  conditional expectation $\Exp[S_i\sd S_k=n]$, for which the relation
  \begin{align*}
    \Exp[S_i\sd S_k=n] = \frac{n}{k}i,
  \end{align*}
  holds, by independent and identical distribution of
  $X_1,\ldots,X_k$. Moreover, by definition of conditional
  expectation, we have that 
  \begin{align*}
  \Exp[S_i\sd S_k=n] = \sum_{s\in\nn} s \frac{P[S_i=s,S_k=n]}{P[S_k=n]} = \sum_{s\in\nn} s\frac{P[S_i=s]\cdot P[S_{k-i}=n-s]}{P[S_k=n]}.
\end{align*}
Combining the two identities for $\Exp[S_i\sd S_k=n]$ and rearranging
yields \eqref{eq:absorption}.
\end{proof}
\begin{remark}\label{rem:triangle}
Note the following important special case of 
\eqref{eq:vandermonde} which results when we let $r=2$ and $k_1=1$ and
$k_2=k-1$,
\begin{align*}
  \binom{k}{n}_{f}=\sum_{\mu=0}^{n}f(\mu)\binom{k-1}{n-\mu}_f,
\end{align*}
which establishes that the quantities 
$\binom{k}{n}_{f}$ 
may be perceived of as generating a Pascal-triangle-like array in
which entries 
in row $k$ are weighted sums of the entries in row
$k-1$. 
To 
illustrate, the left-justified triangle for $f(0)=5$, $f(1)=0$,
$f(2)=2$, $f(3)=1$, $f(x)=0$, for all $x>3$, starts as
\begin{table}[!htb]
\begin{center}
\begin{tabular}{c|ccccccccccc}
  $k\backslash n$
  & 0 & 1 & 2 & 3 & 4 & 5 & 6 & 7 & 8 & 9 &
  $\cdots$\\ \hline 
  0 & 1 \\
  1 & 5 & 0 & 2 & 1\\
  2 & 25 & 0 & 20 & 10 & 4 & 4 & 1\\
  3 & 125 & 0 & 150 & 75 & 60 & 60 & 23 & 12 & 6 & 1\\
  $\vdots$& $\vdots$ & $\cdots$ & $\cdots$& $\cdots$& $\cdots$&
  $\cdots$& $\cdots$ & $\cdots$ & $\cdots$& $\cdots$&
  $\ddots$ \\ 
\end{tabular}
\end{center}
\end{table}
\end{remark}
We also note the following special cases of $\binom{k}{n}_f$, which we will make use of in
Section \ref{sec:fixed}.
\begin{lemma}\label{lemma:f0}
  For all $x,k\in\nn$, we have that  
  \begin{align*}
    \binom{k}{0}_f &= f(0)^k,\\
    \binom{k}{1}_f &= kf(1)f(0)^{k-1},\\
    \binom{1}{x}_f &= f(x),\\ 
    \binom{0}{x}_f &= \begin{cases}1, &
      \text{if } x=0;\\ 0, & \text{otherwise}.\end{cases}
  \end{align*}
\end{lemma}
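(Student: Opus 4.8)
The plan is to prove each of the four identities by reading them directly off the definition of $\binom{k}{n}_f$ as the coefficient of $x^n$ in $\bigl(\sum_{s\in\nn}f(s)x^s\bigr)^k$, as established in Theorem~\ref{theorem:main}. Each identity is a special case or a boundary case of this generating-function description, so the main work is bookkeeping rather than any genuine obstacle.

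First I would handle $\binom{k}{0}_f$. The coefficient of $x^0$ in $\bigl(\sum_{s}f(s)x^s\bigr)^k$ comes only from taking the constant term $f(0)$ from each of the $k$ factors, since any positive power of $x$ would raise the total degree above $0$. Hence $\binom{k}{0}_f=f(0)^k$. Equivalently, this follows from \eqref{eq:comp}: the only solution of $\pi_1+\cdots+\pi_k=0$ in nonnegative integers is $\pi_1=\cdots=\pi_k=0$, contributing $f(0)\cdots f(0)=f(0)^k$.

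Next, for $\binom{k}{1}_f$, I would again use \eqref{eq:comp}. The solutions of $\pi_1+\cdots+\pi_k=1$ in nonnegative integers are exactly those tuples with a single coordinate equal to $1$ and all others equal to $0$; there are $k$ such tuples, each contributing $f(1)f(0)^{k-1}$, giving $\binom{k}{1}_f=kf(1)f(0)^{k-1}$. For $\binom{1}{x}_f$, setting $k=1$ makes the power series \eqref{eq:power} equal to $\sum_{s\in\nn}f(s)x^s$, whose coefficient of $x^x$ is simply $f(x)$; alternatively, the only solution of $\pi_1=x$ is $\pi_1=x$ itself, contributing $f(x)$.

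Finally, for $\binom{0}{x}_f$, I would observe that an empty product equals $1$, so setting $k=0$ in \eqref{eq:power} yields the constant power series $1$, whose coefficient of $x^x$ is $1$ when $x=0$ and $0$ otherwise. Equivalently, the equation $\pi_1+\cdots+\pi_k=x$ with $k=0$ has the empty sum on the left, which equals $x$ only when $x=0$, in which case the single empty composition contributes the empty product $1$. Since every step reduces to inspecting a coefficient in a known generating function or counting a tiny explicit solution set, I do not anticipate any real difficulty; the only point requiring a little care is the $k=0$ convention that the empty product is $1$ and the empty sum is $0$, which is what forces the indicator behavior in the last line.
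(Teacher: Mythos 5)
Your proof is correct: each identity follows directly from reading off coefficients in \eqref{eq:power} or, equivalently, enumerating the solution set of $\pi_1+\cdots+\pi_k=n$ in \eqref{eq:comp}, and you handle the $k=0$ empty-product/empty-sum convention properly. The paper states this lemma without proof, treating it as immediate from the definition, and your argument is exactly the verification the author implicitly has in mind.
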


\section{Some elementary divisibility properties of the number of
  $f$-weighted 
  integer compositions with fixed number of parts}\label{sec:fixed}

\begin{theorem}[Parity of extended binomial
    coefficients]\label{theorem:parity} 
  \begin{align*}
    \binom{k}{n}_{f} \equiv
    \begin{cases}
      0 \pmod{2}, & \text{if $k$ is even and $n$ is odd};\\
      \binom{k/2}{n/2}_{f} \pmod{2}, & \text{if $k$ is even and
        $n$ is even};\\
      \sum_{s\ge 0}f(2s+p(n))\binom{\lfloor k/2\rfloor}{\lfloor
        n/2\rfloor-s}_{f}\pmod{2}, & \text{if $k$ is odd};
    \end{cases}
  \end{align*}
  where we let $p(n)=0$ if $n$ is even and $p(n)=1$ otherwise. 
\end{theorem}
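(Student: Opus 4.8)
The plan is to work directly with the power-series definition $\left(\sum_{s\in\nn} f(s)x^s\right)^k$ and reduce its coefficients modulo $2$ by exploiting the Frobenius-type identity $\left(\sum_s f(s)x^s\right)^2 \equiv \sum_s f(s)^2 x^{2s} \equiv \sum_s f(s) x^{2s} \pmod 2$, where the last step uses $f(s)^2\equiv f(s)\pmod 2$ since squaring fixes residues mod $2$. Writing $g(x)=\sum_s f(s)x^s$, the key observation is that $g(x)^2 \equiv g(x^2)\pmod 2$. This single relation is what drives all three cases, so I would state and verify it first as the central lemma.

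For the \textbf{even $k$} case, I would write $k=2m$ and compute $g(x)^{2m} = \left(g(x)^2\right)^m \equiv g(x^2)^m \pmod 2$. Now $g(x^2)^m$ is a power series in $x^2$, so all of its odd-degree coefficients vanish; this immediately gives the first case (when $n$ is odd, $\binom{2m}{n}_f\equiv 0$). For even $n=2t$, the coefficient of $x^{2t}$ in $g(x^2)^m$ equals the coefficient of $y^t$ in $g(y)^m$ under the substitution $y=x^2$, which is exactly $\binom{m}{t}_f = \binom{k/2}{n/2}_f$. This handles the second case cleanly.

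For the \textbf{odd $k$} case, I would write $k=2m+1$ and factor $g(x)^{2m+1} = g(x)\cdot g(x)^{2m} \equiv g(x)\cdot g(x^2)^m \pmod 2$. Extracting $[x^n]$ from this product, I would expand $g(x)=\sum_a f(a)x^a$ and $g(x^2)^m = \sum_t \binom{m}{t}_f x^{2t}$, so that $[x^n]$ picks out all pairs with $a+2t=n$. Solving $a=n-2t$ and reindexing via $s$ so that $2s+p(n)$ captures the correct parity of $a$ (matching $n$'s parity, since $n-2t\equiv n\pmod 2$), the coefficient becomes $\sum_{s\ge 0} f(2s+p(n))\binom{\lfloor k/2\rfloor}{\lfloor n/2\rfloor - s}_f$, where $m=\lfloor k/2\rfloor$ and the index $t=\lfloor n/2\rfloor - s$. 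The bookkeeping of the parity shift $p(n)$ and the change of summation index is the one genuinely fiddly step.

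The main obstacle I anticipate is not conceptual but purely the index-matching in the odd case: one must check that as $a$ ranges over values of the correct parity with $0\le a\le n$, the substitution $a = 2s+p(n)$ and $t = (n-a)/2 = \lfloor n/2\rfloor - s$ is a bijection onto the claimed summation range, and that out-of-range terms contribute $0$ automatically (via the convention $\binom{m}{j}_f=0$ for $j<0$, consistent with Lemma~\ref{lemma:f0}). Once the Frobenius identity $g(x)^2\equiv g(x^2)\pmod 2$ is in hand, the even cases are essentially immediate and only the odd-case reindexing requires care.
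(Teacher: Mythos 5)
Your proof is correct, but it takes a genuinely different route from the paper. The paper handles the three cases with three separate combinatorial identities: for $k$ even and $n$ odd it multiplies the absorption identity \eqref{eq:absorption} (with $i=1$) by $n$ and reads off parity; for $k$ and $n$ both even it splits $\binom{k}{n}_f$ via the Vandermonde convolution into a symmetric sum $\sum_{\mu+\nu=n}\binom{k/2}{\mu}_f\binom{k/2}{\nu}_f$, whose off-diagonal terms pair up and cancel mod $2$; and for $k$ odd it peels off one factor via Vandermonde with $r=2$, $k_1=1$ and feeds the result back into the first two cases. Your argument instead derives everything from the single Frobenius-type congruence $g(x)^2\equiv g(x^2)\pmod 2$ for $g(x)=\sum_s f(s)x^s$ (using $f(s)^2\equiv f(s)\pmod 2$ for integer values), and all three cases drop out of $g(x)^k\equiv g(x^2)^{\lfloor k/2\rfloor}g(x)^{k\bmod 2}$ by coefficient extraction; your reindexing $a=2s+p(n)$, $t=\lfloor n/2\rfloor-s$ in the odd case is exactly right, and out-of-range terms vanish by the convention that coefficients of negative powers are zero. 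Your approach is more unified and generalizes immediately to arbitrary primes $p$ (it is essentially the mechanism behind Theorem \ref{theorem:prime2} and the Lucas-theorem proof later in the paper), whereas the paper's proof stays within the combinatorial identities of Theorem \ref{prop:convolution} and avoids invoking the freshman's-dream congruence at this stage.
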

\begin{proof} 
  We distinguish three cases. 
  \begin{itemize}
    \item Case $1$: 
	Let $k$ be even and $n$ odd. In 
        \eqref{eq:absorption} in Theorem \ref{theorem:main}
        with $i=1$, 
        multiply both sides by $n$. 
        If $k$ is even, the right-hand
      side is even, and thus, if $n$ is odd, $\binom{k}{n}_{f}$ must
      be even.  
    \item Case $2$: Let $k$ be even and $n$ even. Consider the
      Vandermonde convolution in the case when $r=2$ and $j=k/2$. Then,
      \begin{align*}
        \binom{k}{n}_{f} &=
        \sum_{\mu+\nu=n}\binom{k/2}{\mu}_{f}\binom{k/2}{\nu}_{f}  = 2\sum_{0\le
        \mu<n/2}
        \binom{k/2}{\mu}_{f}\binom{k/2}{n-\mu}_{f}+\binom{k/2}{n/2}_{f}
        \\ &\equiv \binom{k/2}{n/2}_{f}\pmod{2}. 
      \end{align*}
    \item Case $3$: Let $k$ be odd. Then $k-1$ is even. Thus,
      the Vandermonde convolution with $j=1$, $r=2$ implies 
      \begin{align*}
        \binom{k}{n}_{f}=\sum_{s\in
          \nn}f(s)\binom{k-1}{n-s}_{f}\equiv \sum_{\set{s\in \nn\sd 
        n-s \text{ is even}}}
        f(s)\binom{\frac{k-1}{2}}{\frac{n-s}{2}}_{f}
        \pmod{2},
      \end{align*}
      where we use Case $1$ and Case $2$ in the last
      congruence. Hence, if $n$ is even, 
      \begin{align*}
        \binom{k}{n}_{f}\equiv \sum_{s\ge 0}
        f(2s)\binom{\lfloor k/2\rfloor}{\frac{n}{2}-s}_{f}\pmod{2}, 
      \end{align*}
      and if $n$ is odd,
      \begin{align*}
        \binom{k}{n}_{f}\equiv \sum_{s\ge 0}
        f(2s+1)\binom{\lfloor k/2\rfloor}{\lfloor n/2
          \rfloor-s}_{f}\pmod{2}. 
      \end{align*}
  \end{itemize}
\end{proof}
\begin{example}
  Let $f(0)=3,f(1)=2,f(2)=1$ and $f(s)=0$ for all $s>2$. Then, by
  Theorem \ref{theorem:parity},
  \begin{align*}
    \binom{13}{14}_f &\equiv
    f(0)\binom{6}{7}_f+f(2)\binom{6}{6}_f+\underbrace{f(4)\binom{6}{5}_f+\cdots}_{=0}
    \equiv 0+\binom{3}{3}_f\\
    &\equiv
    f(1)\binom{1}{1}_f+\underbrace{f(3)\binom{1}{0}_f+\cdots}_{=0} =
    2\binom{1}{1}_f \equiv 0\pmod{2},
  \end{align*}
  and, in fact, $\binom{13}{14}_f=289,159,780$. 
\end{example}

\begin{theorem}\label{theorem:prime1}
  Let $p$ be prime. Then 
  \begin{align*}
    \binom{p}{n}_{f} \equiv 
    \begin{cases}
      f(r) \pmod{p}, & \text{if $n=pr$ for some $r$};\\ 
      0 \pmod{p}, & \text{else.}
    \end{cases}
  \end{align*}
\end{theorem}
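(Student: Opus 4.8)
The plan is to work directly with the generating-function definition and reduce modulo $p$. Writing $g(x)=\sum_{s\in\nn}f(s)x^s$, Theorem \ref{theorem:main} tells us that $\binom{p}{n}_f=[x^n]\,g(x)^p$, so everything reduces to understanding $g(x)^p$ as an element of $\z[[x]]$ read modulo $p$.

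First I would invoke the ``freshman's dream'' in characteristic $p$. Reduction modulo $p$ is a ring homomorphism $\z[[x]]\goesto\mathbb{F}_p[[x]]$, and over $\mathbb{F}_p$ the $p$-th power map (the Frobenius endomorphism) is additive. Applying this together with Fermat's little theorem $f(s)^p\equiv f(s)\pmod p$ gives
\begin{align*}
  g(x)^p=\Bigl(\sum_{s\in\nn}f(s)x^s\Bigr)^p\equiv\sum_{s\in\nn}f(s)^p x^{ps}\equiv\sum_{s\in\nn}f(s)x^{ps}=g(x^p)\pmod p.
\end{align*}
It then suffices to extract the coefficient of $x^n$ from $g(x^p)=\sum_{s\in\nn}f(s)x^{ps}$: the only exponents occurring are multiples of $p$, so $[x^n]g(x^p)=f(r)$ when $n=pr$ and $[x^n]g(x^p)=0$ otherwise, which is precisely the asserted congruence.

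A second, purely combinatorial route runs through the multinomial expansion \eqref{eq:repr}. There each term carries the coefficient $\binom{p}{k_0,\ldots,k_n}=\frac{p!}{k_0!\cdots k_n!}$, and since $p$ is prime this multinomial coefficient is divisible by $p$ unless some single $k_j$ equals $p$ and all other $k_i$ vanish; the side condition $0\cdot k_0+\cdots+n\cdot k_n=n$ then forces $n=pj$, so such a term survives modulo $p$ only when $n=pr$, in which case it contributes $f(r)^p\equiv f(r)\pmod p$. The main obstacle is merely a point of bookkeeping rather than a genuine difficulty: when $f$ has infinite support, $g(x)$ is a true power series, so one must be sure the Frobenius congruence applies termwise. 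This is immediate, however, because extracting the single coefficient $[x^n]$ involves only $f(0),\ldots,f(n)$, so no convergence or reordering issue arises.
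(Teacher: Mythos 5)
Your proposal is correct. Your second route is, in substance, the paper's own third proof: the paper also expands via \eqref{eq:repr} and kills all terms whose multinomial coefficient is divisible by $p$, the only difference being that it cites the general divisibility \eqref{eq:multi} of Ricci where you argue directly that $\binom{p}{k_0,\ldots,k_n}$ is divisible by $p$ unless some $k_j=p$; both justifications are fine. Your first route --- reducing $g(x)^p\equiv g(x^p)\pmod p$ via the Frobenius endomorphism of $\mathbb{F}_p[[x]]$ and reading off coefficients --- is not literally among the paper's three proofs, but it is the same multinomial argument repackaged in generating-function language (the freshman's dream is proved by exactly the divisibility you invoke in your second route); interestingly, the paper states this very congruence as its Corollary 6 and deduces it \emph{from} the theorem, whereas you run the implication in the other direction. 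Your remark about truncating to $f(0),\ldots,f(n)$ correctly disposes of the only subtlety with infinite support. What you do not reproduce are the paper's other two proofs: a combinatorial one that cyclically shifts the parts of a composition and counts fixed points via Lemma \ref{lemma:comb} (an orbit-counting argument that explains the congruence bijectively), and an inductive one based on the recurrence \eqref{eq:rec} together with Fermat's little theorem. These buy conceptual variety rather than anything your argument lacks.
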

We sketch three proofs of Theorem \ref{theorem:prime1}, 
a combinatorial proof and two proof sketches based on identities in
Theorem \ref{prop:convolution}. The first proof is based on the
following lemma \cite{Anderson:2005}. 
\begin{lemma}\label{lemma:comb}
  Let $S$ be a finite set, let $p$ be prime, and suppose $g:S\goesto
  S$ has the property that $g^p(x)=x$ for any $x$ in $S$, where $g^p$
  is the $p$-fold composition of $g$. Then
  $\abs{S}\equiv\abs{F}\pmod{p}$, where $F$ is the set of fixed points
  of $g$. 
\end{lemma}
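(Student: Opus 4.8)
The plan is to read the hypothesis $g^p=\mathrm{id}_S$ as saying that $g$ generates a cyclic action of $\z/p\z$ on the finite set $S$, and then to count $S$ one orbit at a time. First I would note that $g$ is a bijection: the identity $g^p(x)=x$ exhibits $g^{p-1}$ as a two-sided inverse of $g$, so the iterates $\mathrm{id}_S=g^0,g^1,\ldots,g^{p-1}$ are permutations of $S$ that form a cyclic group under composition. To keep everything elementary, I would instead work directly with the relation $x\sim y$ defined by $y=g^j(x)$ for some $j\ge 0$, and check that $g^p=\mathrm{id}_S$ makes this an equivalence relation (reflexivity from $g^0$, symmetry because $g^{p-j}$ undoes $g^j$, transitivity by composing iterates). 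Its classes are exactly the orbits $\set{x,g(x),g^2(x),\ldots}$, and they partition $S$.

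The key step is to pin down the possible orbit sizes. For a point $x$, let $d$ be the least positive integer with $g^d(x)=x$; then its orbit is $\set{x,g(x),\ldots,g^{d-1}(x)}$ and has exactly $d$ elements. I would show $\divides{d}{p}$ by writing $p=qd+r$ with $0\le r<d$ and observing that $g^{qd}(x)=x$ forces $g^r(x)=g^p(x)=x$, whence $r=0$ by minimality of $d$. Since $p$ is prime, $\divides{d}{p}$ leaves only $d=1$ or $d=p$, so every orbit has size $1$ or size $p$. This orbit-size dichotomy is the only place primality enters and is the crux of the argument; everything else is bookkeeping, and it is the sole point I would single out as the potential obstacle.

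Finally I would read off the congruence. An orbit has size $1$ precisely when $g(x)=x$, i.e.\ precisely when $x\in F$, so the singleton orbits are in bijection with $F$ and contribute $\abs{F}$ to the count, while every remaining orbit contributes $p$. Summing over the partition of $S$ into orbits gives $\abs{S}=\abs{F}+p\cdot m$, where $m$ is the (finite, well-defined) number of size-$p$ orbits, and hence $\abs{S}\equiv\abs{F}\pmod{p}$, as claimed.
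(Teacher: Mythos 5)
Your proof is correct and complete: the orbit decomposition, the divisibility argument showing every orbit has size $1$ or $p$, and the final count $\abs{S}=\abs{F}+pm$ are all sound, and the distinctness of $x,g(x),\ldots,g^{d-1}(x)$ follows from the bijectivity you established. The paper itself gives no proof of this lemma --- it simply cites it to Anderson --- and your argument is the standard orbit-counting proof that the cited source uses, so there is nothing to compare beyond noting that you have supplied the details the paper omits.
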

\begin{proof}[Proof of Theorem \ref{theorem:prime1}, 1]
  For an $f$-weighted integer composition of $n$ with $p$ parts, let $g$
  be the operation that shifts all parts one to the right, modulo
  $p$. In other words, $g$ maps (denoting different colors by superscripts)
  $(\pi_1^{\alpha_1},\pi_2^{\alpha_2},\ldots,\pi_{p-1}^{\alpha_{p-1}},\pi_p^{\alpha_p})$
  to 
  \begin{align*}
    (\pi_p^{\alpha_p},\pi_1^{\alpha_1},\pi_2^{\alpha_2},\ldots,\pi_{p-1}^{\alpha_{p-1}}).
  \end{align*}
  Of course, applying $g$ $p$ times yields the original $f$-colored
  integer composition, that is, $g^{p}(x)=x$ for all $x$. 
	We may thus apply Lemma \ref{lemma:comb}. If $n$ allows a
  representation $n=pr$ for some suitable $r$, $g$ has exactly $f(r)$
  fixed points, namely, all compositions $\underbrace{(r^{1},\ldots,
    r^{1})}_{p \text{ times}}$ to
  $\underbrace{(r^{f(r)},\ldots,r^{f(r)})}_{p \text{ times}}$. Otherwise,
  if $n$ has no such 
  representation, $g$ has no fixed points. 
  This proves the theorem. 
\end{proof}
\begin{proof}[Proof of Theorem \ref{theorem:prime1}, 2]
  We apply \eqref{eq:rec} in Theorem 
  \ref{prop:convolution}. Since for the ordinary binomial
  coefficients, the relation $\binom{p}{n}\equiv
  0\pmod{p}$ holds for all $1\le n\le p-1$ and
  $\binom{p}{0}=\binom{p}{p}=1$, we have 
  \begin{align*}
    \binom{p}{n}_{f}\equiv
    \binom{p}{n}_{f_{|f(\ell)=0}}+f(\ell)^p\binom{0}{n-\ell
      p}_{f_{|f(\ell)=0}}\equiv \binom{p}{n}_{f_{|f(\ell)=0}}+f(\ell)\binom{0}{n-\ell
      p}_{f_{|f(\ell)=0}} \pmod{p}, 
  \end{align*}
  for any $\ell$ and where the last congruence is due to Fermat's
  little theorem. Therefore, if $n=rp$ for some $r$, then
  $\binom{p}{n}_f\equiv  \binom{p}{n}_{f_{|f(r)=0}}+f(r)\pmod{p}$ and
  otherwise $\binom{p}{n}_f \equiv
  \binom{p}{n}_{f_{|f(\ell)=0}}\pmod{p}$ for any $\ell$. 
  Now, the theorem follows inductively. 
\end{proof}
\begin{proof}[Proof of Theorem \ref{theorem:prime1}, 3] 
  Finally, we can use  
  \eqref{eq:repr} in Theorem \ref{prop:convolution} in conjunction
  with the following property of multinomial
  coefficients (see, e.g., Ricci \cite{Ricci:1931}), namely, 
  \begin{align}\label{eq:multi}
    \binom{k}{k_0,\ldots,k_n}\equiv
    0\pmod{\frac{k}{\gcd{(k_0,\ldots,k_n)}}}. 
  \end{align}
  From this, whenever $n\neq pr$, $\binom{p}{n}_f\equiv 0\pmod{p}$
  since for all terms in the summation in \eqref{eq:repr},
  $\gcd{(k_0,\ldots,k_n)}=1$. Otherwise, if $n=pr$ for some $r$, then
  $\gcd{(k_0,\ldots,k_n)}>1$ precisely when one of the $k_i$'s is $p$
  and the remaining are zero. Since also $0k_0+\cdots+nk_n=n=rp$, this
  can only occur when $k_r=p$. Hence, $\binom{p}{rp}_f \equiv
  \binom{p}{0,\ldots,p,\ldots,0}f(r)^p\equiv f(r)\pmod{p}$. 
\end{proof}

The next immediate corollary generalizes the congruence $(1+x)^p\equiv 1+x^p\pmod{p}$, for $p$ prime. 
\begin{corollary}
  Let $p$ be prime. Then,  
  \begin{align*}
      \left(\sum_{s\in\nn}f(s)x^s\right)^p = \sum_{n\in\nn}\binom{p}{n}_{f}x^n\equiv \sum_{r\in\nn}f(r)x^{pr}\pmod{p}.
  \end{align*}
\end{corollary}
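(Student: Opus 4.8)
The plan is to read both sides off coefficient by coefficient and invoke Theorem \ref{theorem:prime1} directly; there is really nothing more to it. First I would dispose of the leftmost equality, which is an identity rather than a congruence: it holds by the very definition of the extended binomial coefficient, since taking $k=p$ in \eqref{eq:power} and extracting the coefficient of $x^n$ gives precisely $\binom{p}{n}_f$. Thus $\left(\sum_{s\in\nn}f(s)x^s\right)^p=\sum_{n\in\nn}\binom{p}{n}_f x^n$ holds as an identity in $\z[[x]]$ (or in $\z[x]$ when $f$ has finite support), and no argument is needed there.

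For the congruence I would interpret ``$\equiv\pmod{p}$'' between power series in the usual coefficientwise sense: two series agree modulo $p$ exactly when, for every $n$, their coefficients of $x^n$ are congruent modulo $p$. I would therefore fix $n\ge 0$ and compare the two sides. On the left the coefficient of $x^n$ is $\binom{p}{n}_f$. On the right, the exponents $pr$ range over exactly the multiples of $p$ as $r$ ranges over $\nn$, and the map $r\mapsto pr$ is injective, so the coefficient of $x^n$ on the right is $f(r)$ when $n=pr$ for some $r\in\nn$ and $0$ otherwise.

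It then remains only to match these coefficients, and this is exactly the content of Theorem \ref{theorem:prime1}: that theorem asserts $\binom{p}{n}_f\equiv f(r)\pmod{p}$ when $n=pr$ and $\binom{p}{n}_f\equiv 0\pmod{p}$ in all other cases. Hence the coefficients of $x^n$ on the two sides are congruent modulo $p$ both when $n$ is a multiple of $p$ and when it is not, and since $n$ was arbitrary the claimed congruence of series follows at once.

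There is no genuine obstacle to overcome; the corollary is simply Theorem \ref{theorem:prime1} repackaged as a statement about generating functions. The only points deserving a moment of care are bookkeeping: that a congruence of power series modulo $p$ is to be understood coefficientwise, and that the substitution $n=pr$ correctly partitions $\nn$ into the multiples of $p$ (on which the right-hand coefficient is $f(r)$) and the non-multiples (on which it vanishes), with no overlap owing to injectivity of $r\mapsto pr$. Once these are noted, the proof is immediate.
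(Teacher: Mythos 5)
Your proof is correct and matches the paper's intent exactly: the paper states this corollary as an immediate consequence of Theorem \ref{theorem:prime1} without further argument, and your coefficientwise comparison is precisely that argument spelled out.
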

\begin{corollary}\label{cor:}
  Let $k,s\ge 0$ and $p$ prime. Then, 
  \begin{align*}
    \binom{k+sp}{j}_f \equiv \binom{k}{j}_ff(0)^{sp}\pmod{p},
  \end{align*}
  for $0\le j<p$. 
\end{corollary}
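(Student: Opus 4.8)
The plan is to work at the level of the generating power series $F(x) = \sum_{t\in\nn} f(t)x^t$, for which $\binom{m}{j}_f = [x^j]F(x)^m$ holds by the definition in \eqref{eq:power}. The key idea is to peel off the factor $F(x)^{sp}$ and reduce it modulo $p$ by means of the corollary immediately preceding this one.

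First I would write
$$F(x)^{k+sp} = F(x)^k \cdot \bigl(F(x)^p\bigr)^s,$$
and invoke the preceding corollary to obtain $F(x)^p \equiv \sum_{r\in\nn} f(r)x^{pr} \pmod{p}$. Raising this to the $s$-th power gives $\bigl(F(x)^p\bigr)^s \equiv \bigl(\sum_{r\in\nn} f(r)x^{pr}\bigr)^s \pmod{p}$. Expanding the right-hand product, every monomial has exponent equal to a multiple of $p$: the constant term is $f(0)^s$ (obtained when all indices are zero), while every other monomial has degree at least $p$.

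Next I would extract the coefficient of $x^j$ for $0 \le j < p$. Because the nonconstant part of $\bigl(F(x)^p\bigr)^s$ is supported in degrees $\ge p$, it cannot combine with any term of $F(x)^k$ to contribute to $[x^j]$ when $j < p$; only the constant term $f(0)^s$ survives. Hence
$$\binom{k+sp}{j}_f = [x^j]\Bigl(F(x)^k \bigl(F(x)^p\bigr)^s\Bigr) \equiv f(0)^s\,[x^j]F(x)^k = f(0)^s\binom{k}{j}_f \pmod{p}.$$

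Finally, Fermat's little theorem yields $f(0)^{sp} = \bigl(f(0)^p\bigr)^s \equiv f(0)^s \pmod{p}$, so the two exponent forms agree and the stated congruence follows. The only point demanding care is the degree bookkeeping—checking that the higher-degree terms of $\bigl(F(x)^p\bigr)^s$ genuinely drop out of $[x^j]$ once $j < p$—together with the harmless replacement of $f(0)^s$ by $f(0)^{sp}$; the remainder is a direct substitution into the preceding corollary.
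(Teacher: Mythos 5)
Your proof is correct and follows essentially the same route as the paper's: the paper applies the Vandermonde convolution \eqref{eq:vandermonde} twice and uses Theorem \ref{theorem:prime1} to kill every term $\binom{sp}{y}_f$ with $0<y<p$, which is precisely your generating-function observation that the nonconstant part of $\bigl(F(x)^p\bigr)^s$ is supported in degrees $\ge p$. The only cosmetic difference is that the paper reads off $\binom{sp}{0}_f=f(0)^{sp}$ directly from Lemma \ref{lemma:f0}, whereas you arrive at $f(0)^s$ and reconcile the exponent via Fermat's little theorem; both are fine.
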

\begin{proof}
  By the Vandermonde convolution, \eqref{eq:vandermonde}, we have 
  \begin{align*}
    \binom{k+sp}{j}_f = \sum_{x+y=j} \binom{k}{x}_f\binom{sp}{y}_f.
  \end{align*}
  Now, again by the Vandermonde convolution, $\binom{sp}{y}_f =
  \sum_{x_1+\cdots+x_s=y}\binom{p}{x_1}_f\cdots\binom{p}{x_s}_f$. Since
  $0\le y\le j< p$, 
  the product $\prod\binom{p}{x_i}_f$ is divisible by $p$
  by Theorem \ref{theorem:prime1} 
  whenever 
  $x_1=\cdots=x_s=0$ does not hold. Therefore,
  \begin{align*}
    \binom{k+sp}{j}_f \equiv \binom{k}{j}_f\binom{sp}{0}_f =
    \binom{k}{j}_ff(0)^{sp}\pmod{p}, 
  \end{align*}
  by Lemma \ref{lemma:f0}.
\end{proof}
\begin{corollary}\label{cor:pplus1}
  Let $p$ be prime and $0\le m,r$ with $r<p$. Then,
  \begin{align*}
    \binom{p+1}{mp+r}_f \equiv \sum_{s\ge 0}f(r+sp)f(m-s)\pmod{p}. 
  \end{align*}
\end{corollary}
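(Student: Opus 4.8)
The plan is to peel off a single part via the Vandermonde convolution \eqref{eq:vandermonde} and then reduce modulo $p$ using the prime-row formula of Theorem \ref{theorem:prime1}. Concretely, I would apply the two-part instance of \eqref{eq:vandermonde} to the composition $(p,1)$ of $p+1$, which writes $\binom{p+1}{n}_f$ as a convolution of a $p$-part term and a single-part term. Using $\binom{1}{\nu}_f=f(\nu)$ from Lemma \ref{lemma:f0} and setting $n=mp+r$, this gives
\begin{align*}
  \binom{p+1}{mp+r}_f = \sum_{\mu+\nu=mp+r}\binom{p}{\mu}_f\binom{1}{\nu}_f = \sum_{\nu\ge 0}\binom{p}{mp+r-\nu}_f\,f(\nu).
\end{align*}

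Next I would reduce modulo $p$. By Theorem \ref{theorem:prime1}, $\binom{p}{\mu}_f\equiv 0\pmod p$ unless $\mu$ is a multiple of $p$, in which case $\binom{p}{pt}_f\equiv f(t)\pmod p$. Hence only those $\nu$ with $mp+r-\nu\equiv 0\pmod p$ survive the reduction; since $0\le r<p$ this forces $\nu\equiv r\pmod p$, so $\nu=sp+r$ for an integer $s\ge 0$, and then $\mu=mp+r-\nu=(m-s)p$. The surviving term is therefore $f(m-s)\,f(sp+r)$, and summing over $s$ yields
\begin{align*}
  \binom{p+1}{mp+r}_f \equiv \sum_{s\ge 0}f(sp+r)\,f(m-s)\pmod p,
\end{align*}
which is the claimed congruence.

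The only thing requiring care --- and what I would flag as the main (if minor) obstacle --- is the bookkeeping at the summation boundaries: I must check that $\nu=sp+r$ parametrizes exactly the relevant residue class among the nonnegative integers, and that the terms with $s>m$ drop out. The latter holds because $s>m$ forces $m-s<0$, whence $f(m-s)=0$ since $f$ is defined only on $\nn$; this is precisely why the statement may be written as an unrestricted sum over $s\ge 0$ rather than over $0\le s\le m$. As a consistency check I would note that the boundary term $\mu=0$ (i.e.\ $s=m$) gives $\binom{p}{0}_f=f(0)^p\equiv f(0)\pmod p$ by Fermat's little theorem, in agreement with the $t=0$ case of Theorem \ref{theorem:prime1}.
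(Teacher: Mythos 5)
Your proof is correct and follows exactly the paper's route: the paper likewise obtains $\binom{p+1}{n}_f=\sum_{s\ge 0}f(s)\binom{p}{n-s}_f$ from the Vandermonde convolution and then reduces modulo $p$ via Theorem \ref{theorem:prime1}. Your version merely spells out the residue-class bookkeeping that the paper leaves implicit.
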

\begin{proof}
  This follows from $\binom{p+1}{n}_f = \sum_{s\ge
    0}f(s)\binom{p}{n-s}_f$ and Theorem \ref{theorem:prime1}. 
\end{proof}
\begin{remark}
  Similar results as in Corollary \ref{cor:pplus1} can be derived for
  $\binom{p+2}{mp+r}_f$, etc., but the formulas become more
  complicated. 
\end{remark}
With similar arguments as before, we can also prove a stronger version
of Theorem \ref{theorem:prime1}, namely:
\begin{theorem}\label{theorem:prime2}
  Let $p$ be prime and let $m\ge 1$. Then
  \begin{align*}
    \binom{p^m}{n}_{f} \equiv 
    \begin{cases}
      f(r) \pmod{p}, & \text{if $n=p^mr$ for some $r$};\\ 
      0 \pmod{p}, & \text{else.}
    \end{cases}
  \end{align*}
\end{theorem}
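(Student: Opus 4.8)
The plan is to prove Theorem~\ref{theorem:prime2} by induction on $m$, using Theorem~\ref{theorem:prime1} as the base case and the Vandermonde convolution \eqref{eq:vandermonde} to perform the inductive step. The key observation is that the desired statement can be rephrased as a polynomial congruence. Writing $g(x)=\sum_{s\in\nn}f(s)x^s$, Theorem~\ref{theorem:prime1} together with its immediate corollary tells us that
\begin{align*}
  g(x)^p \equiv \sum_{r\in\nn}f(r)x^{pr} \pmod{p},
\end{align*}
where congruence of power series is understood coefficientwise. The right-hand side is nothing but $\tilde{g}(x^p)$, where $\tilde{g}(y)=\sum_{r\in\nn}f(r)y^r=g(y)$ has exactly the same coefficients as $g$. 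This is the formal analogue of the Frobenius identity $(1+x)^p\equiv 1+x^p$, and it is the engine that drives the whole argument.

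First I would establish the base case $m=1$, which is exactly Theorem~\ref{theorem:prime1}. For the inductive step, assume the claim holds for some $m\ge 1$, equivalently that
\begin{align*}
  g(x)^{p^m} \equiv \sum_{r\in\nn}f(r)x^{p^mr}\pmod{p}.
\end{align*}
Then I would raise both sides to the $p$-th power. On the left we obtain $g(x)^{p^{m+1}}$. On the right, since raising to the $p$-th power is a ring homomorphism modulo $p$ (the Freshman's dream, valid because all mixed binomial terms vanish mod $p$), we get
\begin{align*}
  \Bigl(\sum_{r\in\nn}f(r)x^{p^mr}\Bigr)^p \equiv \sum_{r\in\nn}f(r)^p\,x^{p^{m+1}r}\equiv \sum_{r\in\nn}f(r)\,x^{p^{m+1}r}\pmod{p},
\end{align*}
where the last step is Fermat's little theorem applied to each coefficient $f(r)^p\equiv f(r)$. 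Reading off the coefficient of $x^n$ on both sides of $g(x)^{p^{m+1}}\equiv\sum_r f(r)x^{p^{m+1}r}$ yields precisely the statement for $m+1$: the coefficient is $f(r)$ when $n=p^{m+1}r$ and $0$ otherwise.

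The main subtlety, and the step I would state carefully, is the validity of the Freshman's dream for the infinite power series $\sum_r f(r)x^{p^mr}$ rather than a single monomial. This is justified by the same reasoning as in the Corollary following Theorem~\ref{theorem:prime1}: expanding the $p$-th power by the multinomial theorem, every term whose multi-index is not concentrated on a single $r$ carries a multinomial coefficient divisible by $p$ (by \eqref{eq:multi}, or equivalently by the vanishing of the intermediate ordinary binomial coefficients $\binom{p}{i}$ for $0<i<p$), and hence vanishes modulo $p$; only the ``pure'' terms $f(r)^p x^{p^{m+1}r}$ survive. Alternatively, one can view this purely coefficientwise and invoke Theorem~\ref{theorem:prime1} directly with the substitution $x\mapsto x^{p^m}$ applied to the polynomial identity, which avoids any convergence concerns since we assume (without loss of generality, exactly as in the proof of \eqref{eq:absorption}) that $\sum_{s}f(s)$ is finite so that all coefficients are well defined.
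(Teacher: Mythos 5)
Your proof is correct. Note that the paper itself gives no explicit argument for Theorem~\ref{theorem:prime2}; it only remarks that it follows ``with similar arguments as before,'' i.e., by adapting one of the three proofs of Theorem~\ref{theorem:prime1} with $p$ replaced by $p^m$ (a cyclic-shift action of order $p^m$ whose non-trivial orbits have size divisible by $p$; the recurrence \eqref{eq:rec} together with $\binom{p^m}{i}\equiv 0\pmod p$ for $0<i<p^m$ and iterated Fermat; or the multinomial identity \eqref{eq:multi} with the observation that $\gcd(k_0,\ldots,k_n)$ must equal $p^m$ for a term to survive). You instead induct on $m$, treating the case $m=1$ as a black box in the form of the generating-function congruence $g(x)^p\equiv g(x^p)\pmod p$ and iterating it via the Frobenius property of $p$-th powers in $(\mathbb{Z}/p\mathbb{Z})[[x]]$. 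This is a genuinely different and arguably cleaner route: it avoids re-examining fixed points or gcd case analyses for $k=p^m$, and it supplies exactly the justification that the paper's later proof of Lucas' theorem silently relies on (that proof cites Theorem~\ref{theorem:prime2} to write $(\sum_s f(s)x^s)^{p^j}\equiv\sum_s f(s)x^{p^js}$, so your independent derivation removes any appearance of circularity). Two small remarks: the appeal to ``wlog $\sum_s f(s)$ finite'' is unnecessary here, since every coefficient of a product of formal power series with nonnegative exponents is automatically a finite sum, so there is no convergence issue to dispel; and the Freshman's-dream step is most economically phrased, as you note at the end, by applying Theorem~\ref{theorem:prime1} itself under the substitution $x\mapsto x^{p^m}$, which is a ring homomorphism preserving coefficientwise congruence mod $p$.
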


We call the next congruence Babbage's congruence, since Charles
Babbage was apparently the first to assert the respective congruence
in the case of ordinary binomial coefficients \cite{Babbage:1819}. 
\begin{theorem}[Babbage's congruence]\label{theorem:babbage}
 Let $p$ be prime, and let $n$ and $m$ be nonnegative integers. Then
\begin{align*}
  \binom{np}{mp}_{f} \equiv \binom{n}{m}_{g}\pmod{p^2},
\end{align*}  
whereby $g$ is defined as $g(r)=\binom{p}{rp}_{f}$, for all $r\in\nn$. 
\end{theorem}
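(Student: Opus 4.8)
The plan is to lift the classical generating-function proof of Babbage's congruence to the weighted setting. Write $F(x)=\sum_{s\in\nn}f(s)x^s$, so that $\binom{np}{mp}_f=[x^{mp}]F(x)^{np}$, and let $G(y)=\sum_{r\in\nn}g(r)y^r=\sum_{r\in\nn}\binom{p}{rp}_fy^r$, so that $\binom{n}{m}_g=[y^m]G(y)^n$. The whole argument rests on finding a clean description of $F(x)^p$ modulo $p^2$ that separates the ``multiple-of-$p$'' exponents from the rest.

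First I would establish the decomposition
\[
  F(x)^p=G(x^p)+p\,R(x),
\]
where $R(x)=\sum_{p\nmid n}\tfrac1p\binom{p}{n}_f\,x^n$ is a power series with integer coefficients whose support consists only of exponents \emph{not} divisible by $p$. Indeed, $F(x)^p=\sum_n\binom{p}{n}_f x^n$, and Theorem~\ref{theorem:prime1} tells us that $\binom{p}{n}_f\equiv f(r)\pmod p$ when $n=rp$ and $\binom{p}{n}_f\equiv0\pmod p$ otherwise. Splitting the sum according to whether $p\mid n$ yields exactly $\sum_r\binom{p}{rp}_f x^{rp}=G(x^p)$ from the former block, while every coefficient in the latter block is divisible by $p$, which is what lets me factor out $p$ to define $R(x)$. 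This is the precise analogue of $(1+x)^p=1+x^p+p\,h(x)$ in the classical proof.

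With this in hand, I would raise both sides to the $n$-th power and reduce modulo $p^2$: since $(pR(x))^j\equiv0\pmod{p^2}$ for $j\ge2$, the binomial expansion collapses to
\[
  F(x)^{np}\equiv G(x^p)^n+np\,R(x)\,G(x^p)^{n-1}\pmod{p^2}.
\]
Extracting $[x^{mp}]$ then finishes the proof. The first term contributes $[x^{mp}]G(x^p)^n=\sum_{r_1+\cdots+r_n=m}g(r_1)\cdots g(r_n)=\binom{n}{m}_g$. For the second term, observe that $G(x^p)^{n-1}$ is supported on exponents divisible by $p$ whereas $R(x)$ is supported on exponents not divisible by $p$; hence their product is supported entirely off the multiples of $p$, so its coefficient of $x^{mp}$ vanishes. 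Combining the two contributions gives $\binom{np}{mp}_f\equiv\binom{n}{m}_g\pmod{p^2}$.

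I expect the main obstacle to be bookkeeping rather than conceptual: one must verify carefully that $R(x)$ has integer coefficients (so that $pR(x)$ genuinely carries a factor $p$ and the higher terms die modulo $p^2$) and that the support of $R$ is exactly ``$p\nmid n$,'' since it is this support disjointness---not any cancellation---that annihilates the cross term. A minor additional point is that when $f$ has infinite support $F(x)$ and $R(x)$ are power series rather than polynomials, but each coefficient extraction $[x^{mp}]$ remains a finite sum, so no convergence issue arises.
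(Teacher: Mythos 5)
Your proof is correct, and it is essentially the paper's argument in generating-function dress: the paper expands $\binom{np}{mp}_f$ via the Vandermonde convolution into $\sum_{k_1+\cdots+k_n=mp}\prod_i\binom{p}{k_i}_f$ and invokes Theorem~\ref{theorem:prime1} to discard, modulo $p^2$, all terms with more than one index not divisible by $p$, the sum constraint then forcing every index to be a multiple of $p$ --- which is precisely your decomposition $F(x)^p=G(x^p)+pR(x)$, the collapse of the binomial expansion modulo $p^2$, and the support-disjointness killing of the cross term. No gap; the two write-ups differ only in bookkeeping.
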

\begin{proof}
By the Vandermonde convolution, we have
\begin{align}\label{eq:p2}
  \binom{np}{mp}_f = \sum_{k_1+\cdots+k_n=mp}\binom{p}{k_1}_f\cdots\binom{p}{k_n}_f
\end{align}
Now, by Theorem \ref{theorem:prime1}, $p$ divides
$\binom{p}{x}_{f}$ 
whenever $x$ is not of the form $x=pr$. Hence, modulo
$p^2$, the only terms that contribute to the sum are those for which
at least $n-1$ $k_i$'s are of the form $k_i=r_ip$. Since the
$k_i$'s must sum to $mp$, this implies that all $k_i$'s are of the
form $k_i=r_ip$, for $i=1,\ldots,n$. Hence, 
modulo $p^2$, \eqref{eq:p2} becomes 
\begin{align*}
   \sum_{{r_1+\cdots+r_n=m}}\prod_{i=1}^n \binom{p}{r_ip}_{f} =\sum_{{r_1+\cdots+r_n=m}}\prod_{i=1}^n g(r_i),
\end{align*}
The last sum is precisely $\binom{n}{m}_{g}$.
\end{proof}
\begin{corollary}\label{cor:babbage}
  Let $r\ge 0$ and let $p$ be prime. Then 
  \begin{align*}
  \binom{pr}{p}_f \equiv \binom{p}{p}_ff(0)^{p(r-1)}r\pmod{p^2}.
  \end{align*}
\end{corollary}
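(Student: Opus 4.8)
The plan is to obtain this directly from Babbage's congruence (Theorem \ref{theorem:babbage}) by specializing the parameters and then evaluating the resulting extended binomial coefficient via the closed forms in Lemma \ref{lemma:f0}. Recall that Theorem \ref{theorem:babbage} gives $\binom{np}{mp}_f \equiv \binom{n}{m}_g \pmod{p^2}$, where $g(r)=\binom{p}{rp}_f$. The left-hand side of the corollary is $\binom{pr}{p}_f = \binom{rp}{1\cdot p}_f$, so I would apply Babbage's congruence with $n=r$ and $m=1$, yielding
\begin{align*}
  \binom{pr}{p}_f \equiv \binom{r}{1}_g \pmod{p^2}.
\end{align*}

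It then remains to compute $\binom{r}{1}_g$. By the second identity in Lemma \ref{lemma:f0}, applied with the weight function $g$ in place of $f$ and with $k=r$, we have $\binom{r}{1}_g = r\,g(1)\,g(0)^{r-1}$. So I would next evaluate $g$ at the two arguments $0$ and $1$. By definition $g(1)=\binom{p}{p}_f$, and $g(0)=\binom{p}{0}_f$; by the first identity in Lemma \ref{lemma:f0} the latter equals $f(0)^p$. Substituting these gives
\begin{align*}
  \binom{r}{1}_g = r\,\binom{p}{p}_f\,\bigl(f(0)^p\bigr)^{r-1} = \binom{p}{p}_f\,f(0)^{p(r-1)}\,r,
\end{align*}
which is exactly the claimed expression. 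Combining the two displayed congruences completes the argument.

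Since the whole statement is a short specialization, there is no genuine obstacle here; the only point requiring care is bookkeeping. In particular one must correctly read off $g(0)=f(0)^p$ (so that raising to the power $r-1$ produces the exponent $p(r-1)$, not $r-1$), and one must take the hypothesis $r\ge 0$ at face value so that the boundary cases $r=0$ and $r=1$ are covered by the same formula. I would verify these edge cases briefly but expect no difficulty. Finally, I note that $\binom{p}{p}_f$ is left unsimplified in the statement; by Theorem \ref{theorem:prime1} it satisfies $\binom{p}{p}_f\equiv f(1)\pmod p$, but since the corollary is stated modulo $p^2$ there is no gain in replacing it, so I would leave it as is.
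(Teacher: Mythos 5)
Your proof is correct and follows exactly the route the paper intends: its proof of this corollary is the one-line instruction to combine Theorem \ref{theorem:babbage} (with $n=r$, $m=1$) with Lemma \ref{lemma:f0}, which is precisely what you carry out, including the correct evaluation $g(0)=f(0)^p$. No issues.
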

\begin{proof}
  This follows by combining Theorem \ref{theorem:babbage} and Lemma
  \ref{lemma:f0}. 
\end{proof}
Now, we consider the case when $x$ in $\binom{np}{x}_f$ is not of the
form $mp$ for some $m$. 
\begin{theorem}\label{theorem:babbage2}
  Let $p$ be prime and let $s,r$ be nonnegative integers. 
  Let $p$ not divide $r$. Then, 
  \begin{align*}
    \binom{sp}{r}_f \equiv
    s\cdot
    \sum_{\set{0\le i_1\le r\sd r-i_1=m_{i_1}p}}
    \binom{p}{i_1}_f\binom{s-1}{m_{i_1}}_g \pmod{p^2},
  \end{align*}
  where $g$ is as defined in Theorem \ref{theorem:babbage}. 
\end{theorem}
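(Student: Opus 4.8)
The plan is to follow the proof of Theorem~\ref{theorem:babbage} closely, again starting from the Vandermonde convolution~\eqref{eq:vandermonde} applied to the composition $(p,\ldots,p)$ of $sp$ into $s$ equal parts,
\begin{align*}
  \binom{sp}{r}_f = \sum_{k_1+\cdots+k_s=r}\binom{p}{k_1}_f\cdots\binom{p}{k_s}_f,
\end{align*}
and then discarding the terms that vanish modulo $p^2$. By Theorem~\ref{theorem:prime1}, each factor $\binom{p}{k_i}_f$ is divisible by $p$ unless $k_i$ is a multiple of $p$. Hence a product $\binom{p}{k_1}_f\cdots\binom{p}{k_s}_f$ is divisible by $p^2$ as soon as two or more of the $k_i$ fail to be multiples of $p$, so that modulo $p^2$ only those terms survive in which at most one $k_i$ is a non-multiple of $p$. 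The decisive difference from Theorem~\ref{theorem:babbage} is the hypothesis $p\nmid r$: since $r=k_1+\cdots+k_s$, the $k_i$ cannot all be multiples of $p$, and therefore in every surviving term \emph{exactly} one $k_i$ is a non-multiple of $p$.

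Next I would exploit the symmetry of the sum in the indices $1,\ldots,s$. Partitioning the surviving terms according to which index $j$ carries the unique non-multiple of $p$, each of the $s$ resulting blocks contributes the same amount, so it suffices to evaluate the block $j=1$ and multiply by $s$. In that block, write $k_1=i_1$ with $p\nmid i_1$; this forces $i_1\equiv r\pmod{p}$, whence $r-i_1=m_{i_1}p$ for the integer $m_{i_1}=(r-i_1)/p$. The remaining parts then satisfy $k_2+\cdots+k_s=m_{i_1}p$ with each $k_i=r_ip$ a multiple of $p$, i.e.\ $r_2+\cdots+r_s=m_{i_1}$.

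Finally I would recognize the inner sum as an extended binomial coefficient for $g$. Holding $i_1$ fixed and summing $\binom{p}{r_2p}_f\cdots\binom{p}{r_sp}_f=g(r_2)\cdots g(r_s)$ over all $(r_2,\ldots,r_s)$ with $r_2+\cdots+r_s=m_{i_1}$ gives, by the defining sum~\eqref{eq:comp} applied to $g$, precisely $\binom{s-1}{m_{i_1}}_g$. Summing over the admissible values $0\le i_1\le r$ with $i_1\equiv r\pmod{p}$ and reinstating the factor $s$ then yields the asserted congruence. The main obstacle is the bookkeeping of the $p^2$-reduction together with the symmetry partition: one must check that no term with two or more non-multiples of $p$ survives, and that splitting by the distinguished index $j$ neither double-counts nor omits any contribution. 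Here the hypothesis $p\nmid r$ does the essential work, ruling out the all-multiples-of-$p$ configuration and pinning the multiplicity of identical blocks to exactly $s$.
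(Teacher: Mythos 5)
Your proposal is correct and follows essentially the same route as the paper's own proof: the Vandermonde convolution over the composition $(p,\ldots,p)$ of $sp$, elimination modulo $p^2$ of terms with two or more indices not divisible by $p$ via Theorem \ref{theorem:prime1}, the use of $p\nmid r$ to force exactly one such index, the symmetry factor $s$, and the identification of the inner sum as $\binom{s-1}{m_{i_1}}_g$. No substantive differences.
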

\begin{proof}
  By the Vandermonde convolution, \eqref{eq:vandermonde}, we find
  that
  \begin{align*}
    \binom{sp}{r}_f =
    \sum_{i_1+\cdots+i_s=r}\binom{p}{i_1}_f\cdots\binom{p}{i_s}_f = \sum_{i_1=0}^r \binom{p}{i_1}_f
    \sum_{i_2+\cdots+i_s=r-i_1}\binom{p}{i_2}_f\cdots\binom{p}{i_s}_f. 
  \end{align*}
  Now, $\binom{p}{x}_f\equiv 0\pmod{p}$ whenever $x$ is not of the form
  $x=ap$, by Theorem \ref{theorem:prime1}. Thus, modulo $p^2$, the
  above RHS 
  is $\equiv 0$ 
  unless for at least $s-1$ factors $\binom{p}{i_j}_f$ we have that
  $i_j=a_jp$ for some $a_j$. Not all $s$ factors can be of the form
  $a_jp$, since otherwise $i_1+\cdots+i_s=p(a_1+\cdots+a_s)=r$,
  contradicting that $p\nmid r$. Hence, exactly $s-1$ factors must be
  of the form $a_jp$, and therefore, 
  \begin{align*}
    \binom{sp}{r}_f & \equiv s\sum_{i_1=0,p\nmid i_1}^r \binom{p}{i_1}_f
    \sum_{a_2p+\cdots+a_sp=r-i_1}\binom{p}{a_2p}_f\cdots\binom{p}{a_sp}_f
    \\
    &= s\sum_{i_1=0,p\nmid i_1}^r \binom{p}{i_1}_f
    \sum_{a_2p+\cdots+a_sp=r-i_1}g(a_2)\cdots g(a_s)\pmod{p^2},
  \end{align*}
  Now, the
  equation $p(a_2+\cdots+a_s)=a_2p+\cdots+a_sp=r-i_1$ has solutions
  only when $\divides{p}{r-i_1}$, that is, when there exists $m_{i_1}$
  such that $r-i_1=m_{i_1}p$. 
\end{proof}
\begin{corollary}\label{cor:1}
  Let $p$ be prime, $s\ge 0$ and let $0\le r\le p$. Then,
  \begin{align*}
    \binom{sp}{r}_f \equiv s\binom{p}{r}_f\cdot f(0)^{p(s-1)}\pmod{p^2}.
  \end{align*}
\end{corollary}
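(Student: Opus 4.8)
The plan is to split on whether $p\mid r$. In the range $0\le r\le p$ the only values with $p\mid r$ are $r=0$ and $r=p$, so the generic case $1\le r\le p-1$ can be read directly off Theorem \ref{theorem:babbage2}, while the two endpoints must be handled separately precisely because that theorem assumes $p\nmid r$.

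First I would dispatch $1\le r\le p-1$. Here $p\nmid r$, so Theorem \ref{theorem:babbage2} applies verbatim, and the key point is that its index set $\set{0\le i_1\le r\sd r-i_1=m_{i_1}p}$ collapses to a single term: since $0\le r-i_1\le r\le p-1$, the only multiple of $p$ available in that range is $0$, which forces $i_1=r$ and $m_{i_1}=0$. The sum therefore reduces to the one term $\binom{p}{r}_f\binom{s-1}{0}_g$, and it remains only to unwind the $g$-weighted factor via Lemma \ref{lemma:f0}: $\binom{s-1}{0}_g=g(0)^{s-1}$, and $g(0)=\binom{p}{0}_f=f(0)^p$, whence $\binom{s-1}{0}_g=f(0)^{p(s-1)}$. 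This yields $\binom{sp}{r}_f\equiv s\binom{p}{r}_f f(0)^{p(s-1)}\pmod{p^2}$ for $1\le r\le p-1$, as claimed.

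The step I expect to be the real obstacle is the endpoint $r=p$, which Theorem \ref{theorem:babbage2} does not reach since $p\mid p$. For this I would instead appeal to Babbage's congruence (Theorem \ref{theorem:babbage}) with $np=sp$ and $mp=p$, i.e.\ $n=s$ and $m=1$, giving $\binom{sp}{p}_f\equiv\binom{s}{1}_g\pmod{p^2}$; Lemma \ref{lemma:f0} then turns $\binom{s}{1}_g$ into $s\,g(1)g(0)^{s-1}=s\binom{p}{p}_f f(0)^{p(s-1)}$, which is exactly the asserted formula at $r=p$. Equivalently, one can rerun the Vandermonde bookkeeping of Theorem \ref{theorem:babbage2}: modulo $p^2$ at most one of the $s$ factors $\binom{p}{i_j}_f$ may have $i_j$ not a multiple of $p$, but the constraint $\sum_j i_j=p$ then forces that index to be a multiple of $p$ as well, so only the $s$ splittings with a single $i_j=p$ survive. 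Finally, the degenerate endpoint $r=0$ is special and worth flagging: Lemma \ref{lemma:f0} gives the exact value $\binom{sp}{0}_f=f(0)^{sp}$, so at $r=0$ the displayed formula is to be read without its leading factor $s$ (equivalently, the clean statement is obtained by restricting to $1\le r\le p$).
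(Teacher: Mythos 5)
Your proposal is correct and follows essentially the same route as the paper: Theorem \ref{theorem:babbage2} with the single-term collapse $i_1=r$, $m_{i_1}=0$ for $r<p$ not divisible by $p$, and Babbage's congruence (i.e., Corollary \ref{cor:babbage}) for the endpoint $r=p$. You are also right to flag $r=0$ as a genuine exception that the paper's proof glosses over: Theorem \ref{theorem:babbage2} assumes $p\nmid r$, and at $r=0$ the exact value $\binom{sp}{0}_f=f(0)^{sp}$ differs from the claimed $s\,f(0)^{sp}$ modulo $p^2$ whenever $p^2\nmid(s-1)f(0)^{sp}$ (e.g., $s=2$, $f(0)=1$), so the corollary should really be read with $1\le r\le p$.
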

\begin{proof}
  For $r=p$, 
  this is Corollary \ref{cor:babbage}. 
  For $0\le r<p$, the proof follows from
  Theorem \ref{theorem:babbage2} by noting that $i_1=r$ and $m_{i_1}=0$ is
  the only solution to the sum constraint.
\end{proof}
Corollary \ref{cor:1} immediately implies the following: 
\begin{corollary}
  Let $0\le r,s\le p$. Then,
  \begin{align*}
    f(0)^{p(s-1)}s\binom{rp}{r}_f \equiv f(0)^{p(r-1)}r\binom{sp}{r}_f
    \pmod{p^2}. 
  \end{align*}
\end{corollary}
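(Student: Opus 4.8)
The plan is to derive this purely from Corollary \ref{cor:1}, applied in two ways, and then observe that both sides collapse to the same expression; this is precisely why the surrounding text announces that Corollary \ref{cor:1} ``immediately implies the following.''

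First I would invoke Corollary \ref{cor:1} with its free multiplier specialized to $r$. Since $0\le r\le p$ by hypothesis, this reads
\begin{align*}
  \binom{rp}{r}_f \equiv r\binom{p}{r}_f f(0)^{p(r-1)}\pmod{p^2}.
\end{align*}
Multiplying this congruence through by $f(0)^{p(s-1)}s$ gives
\begin{align*}
  f(0)^{p(s-1)}s\binom{rp}{r}_f \equiv sr\,\binom{p}{r}_f f(0)^{p(s-1)+p(r-1)}\pmod{p^2}.
\end{align*}

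Next I would apply Corollary \ref{cor:1} in its original form to $\binom{sp}{r}_f$---again permissible because $0\le r\le p$---obtaining $\binom{sp}{r}_f \equiv s\binom{p}{r}_f f(0)^{p(s-1)}\pmod{p^2}$, and then multiply through by $f(0)^{p(r-1)}r$ to get
\begin{align*}
  f(0)^{p(r-1)}r\binom{sp}{r}_f \equiv sr\,\binom{p}{r}_f f(0)^{p(r-1)+p(s-1)}\pmod{p^2}.
\end{align*}
Comparing the two displayed right-hand sides, they are identical---each equals $sr\,\binom{p}{r}_f f(0)^{p(s-1)+p(r-1)}$, the exponents agreeing by commutativity of addition---so the two left-hand sides are congruent modulo $p^2$, which is exactly the claim.

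There is essentially no hard step here: the entire content is the trivial symmetry $p(s-1)+p(r-1)=p(r-1)+p(s-1)$ combined with the single nontrivial input, Corollary \ref{cor:1}. The only point requiring a moment's care is verifying that both invocations fall within the hypotheses of that corollary, namely that the top index be a nonnegative multiple of $p$ and that the bottom index $r$ satisfy $0\le r\le p$; under the present assumption $0\le r,s\le p$, both $\binom{rp}{r}_f$ and $\binom{sp}{r}_f$ qualify. (When $r=0$ or $s=0$ the factors $f(0)^{p(r-1)}$ or $f(0)^{p(s-1)}$ carry a negative exponent, but these are read formally, exactly as they already appear in the statement of Corollary \ref{cor:1} itself, so no additional justification is needed.)
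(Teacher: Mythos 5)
Your proof is correct and matches the paper's intent exactly: the paper gives no explicit proof, stating only that Corollary \ref{cor:1} ``immediately implies'' the result, and your two applications of that corollary (to $\binom{rp}{r}_f$ and to $\binom{sp}{r}_f$, both with bottom index $r\le p$) followed by multiplying each congruence by the appropriate factor is precisely the intended argument.
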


\begin{theorem}\label{theorem:divis}
Let $m,k,n\ge 0$ be nonnegative integers. Then
\begin{align*}
\binom{mk}{n}_{f}\equiv 0\pmod{\frac{k}{\gcd{(k,n)}}}.
\end{align*}
\end{theorem}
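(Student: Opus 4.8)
The plan is to derive the statement term by term from the multinomial representation \eqref{eq:repr}, combined with the divisibility property \eqref{eq:multi} of multinomial coefficients that was already invoked in the third proof of Theorem \ref{theorem:prime1}. First I would apply \eqref{eq:repr} with $mk$ in place of $k$, writing
\begin{align*}
\binom{mk}{n}_f = \sum_{\substack{j_0+\cdots+j_n=mk\\ j_1+2j_2+\cdots+nj_n=n}} \binom{mk}{j_0,\ldots,j_n}\prod_{i=0}^n f(i)^{j_i}.
\end{align*}
Since the $f(i)^{j_i}$ are nonnegative integers, it suffices to show that $\frac{k}{\gcd(k,n)}$ divides every multinomial coefficient $\binom{mk}{j_0,\ldots,j_n}$ occurring in this sum; the whole sum then inherits the divisibility.

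Fix one admissible tuple $(j_0,\ldots,j_n)$ and set $d=\gcd(j_0,\ldots,j_n)$. By \eqref{eq:multi}, $\frac{mk}{d}$ divides $\binom{mk}{j_0,\ldots,j_n}$, so the task reduces to the purely arithmetic claim that $\frac{k}{g}$ divides $\frac{mk}{d}$, where $g=\gcd(k,n)$; this is equivalent to $d\mid mg$. The two constraints defining the summation supply exactly the needed facts about $d$: from $j_0+\cdots+j_n=mk$ we get $d\mid mk$, and from $j_1+2j_2+\cdots+nj_n=n$ we get $d\mid n$.

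Thus the heart of the matter, and the step I expect to be the main obstacle, is the elementary lemma: if $d\mid mk$ and $d\mid n$, then $d\mid m\gcd(k,n)$. I would prove it prime by prime, comparing $p$-adic valuations. The hypotheses give $v_p(d)\le v_p(m)+v_p(k)$ and $v_p(d)\le v_p(n)$, while the goal is $v_p(d)\le v_p(m)+\min\bigl(v_p(k),v_p(n)\bigr)$. A short case split settles it: if $v_p(k)\le v_p(n)$ then the right-hand side equals $v_p(m)+v_p(k)=v_p(mk)\ge v_p(d)$; if $v_p(k)>v_p(n)$ then it is at least $v_p(n)\ge v_p(d)$. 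Hence $d\mid mg$, so $\frac{k}{g}$ divides $\frac{mk}{d}$ and therefore divides $\binom{mk}{j_0,\ldots,j_n}$, and summing over all admissible tuples yields $\frac{k}{\gcd(k,n)}\mid\binom{mk}{n}_f$. The degenerate cases ($k=0$, or $n=0$) can be checked directly against Lemma \ref{lemma:f0} and carry no real content.
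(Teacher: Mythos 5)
Your argument is correct, but it is not the route the paper takes. The paper's proof is a two-line application of the absorption identity \eqref{eq:absorption} with $i=1$: writing $n\binom{mk}{n}_{f}=mk\sum_{s}sf(s)\binom{mk-1}{n-s}_{f}$ and dividing by $d=\gcd(k,n)$ gives $\tfrac{n}{d}\binom{mk}{n}_{f}=\tfrac{k}{d}A$ with $A$ a nonnegative integer, and since $\gcd(n/d,k/d)=1$ the factor $k/d$ must divide $\binom{mk}{n}_{f}$. You instead work term by term through the multinomial representation \eqref{eq:repr} together with Ricci's divisibility \eqref{eq:multi}, reducing everything to the arithmetic lemma that $d\mid mk$ and $d\mid n$ imply $d\mid m\gcd(k,n)$; your valuation-based case split is fine, though you could shorten it by noting $d\mid n$ implies $d\mid mn$, hence $d\mid\gcd(mk,mn)=m\gcd(k,n)$. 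Your approach is in the spirit of the paper's third proof of Theorem \ref{theorem:prime1} and of Theorem \ref{theorem:ms}, and it buys a slightly stronger conclusion (each multinomial summand in \eqref{eq:repr} is individually divisible by $k/\gcd(k,n)$), at the cost of invoking the external fact \eqref{eq:multi} and a bit more bookkeeping; the paper's proof is shorter and self-contained given Theorem \ref{prop:convolution}. Both proofs require a word about the degenerate cases ($n=0$, where the modulus is $1$, and $k=0$), which you acknowledge.
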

\begin{proof}
From \eqref{eq:absorption}, with $i=1$, write 
\begin{align*}
  \frac{1}{d}n\binom{mk}{n}_{f} = \frac{1}{d}mk\sum_{s\in\nn}sf(s)\binom{mk-1}{n-s}_{f} = \frac{k}{d}A,
\end{align*}
where $A\in\nn$, $d=\gcd(k,n)$ and note that $\gcd(k/d,n/d)=1$.
\end{proof}

\begin{theorem}\label{theorem:ms}
  Let $p$ be prime and $r\ge 1$ arbitrary. Then, 
\begin{align*}
 \binom{pr}{p}_{f}\equiv f(0)^{p(r-1)}f(1)^{p}\binom{pr}{p} \pmod{pr}. 
\end{align*}
\end{theorem}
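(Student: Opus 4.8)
The plan is to reduce the statement, via the multinomial expansion \eqref{eq:repr}, to a single surviving term modulo $pr$. I would first apply \eqref{eq:repr} with $k=pr$ and $n=p$ to write
\[
  \binom{pr}{p}_f = \sum_{\substack{k_0+\cdots+k_p=pr\\ k_1+2k_2+\cdots+p\,k_p=p}} \binom{pr}{k_0,\ldots,k_p}\prod_{i=0}^p f(i)^{k_i},
\]
where the tuple $(k_0,\ldots,k_p)$ records how many of the $pr$ parts equal $i$. The distinguished tuple $k_0=p(r-1)$, $k_1=p$, $k_i=0$ for $i\ge 2$ satisfies both constraints and contributes precisely $\binom{pr}{p}\,f(0)^{p(r-1)}f(1)^p$, the asserted right-hand side. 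So the whole problem becomes showing that every \emph{other} admissible tuple contributes a multiple of $pr$.

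For the key step I would invoke the multinomial divisibility \eqref{eq:multi}, which makes $\binom{pr}{k_0,\ldots,k_p}$ divisible by $pr/\gcd(k_0,\ldots,k_p)$. The crux is therefore a gcd computation: writing $g=\gcd(k_0,\ldots,k_p)$, any common divisor of the $k_i$ divides the weighted sum $k_1+2k_2+\cdots+p\,k_p=p$, so $g\mid p$ and hence $g\in\{1,p\}$ because $p$ is prime. If $g=p$, substituting $k_i=p\,m_i$ turns the constraint into $m_1+2m_2+\cdots+p\,m_p=1$, whose only nonnegative solution is $m_1=1$ with all other $m_i=0$; this forces exactly the distinguished tuple above. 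Consequently every non-distinguished admissible tuple has $g=1$, so $pr\mid\binom{pr}{k_0,\ldots,k_p}$ for each of them, and summing yields $\binom{pr}{p}_f\equiv \binom{pr}{p}f(0)^{p(r-1)}f(1)^p\pmod{pr}$.

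The delicate point, and the only step beyond routine bookkeeping, is precisely the observation that $g=p$ can occur for a \emph{unique} tuple; this is what upgrades \eqref{eq:multi} from divisibility by $pr/g$ to divisibility by $pr$ across the entire set of leftover terms. I expect no further obstacle: the $r=1$ case collapses to the modulus $p$ and is consistent with Theorem \ref{theorem:prime1} via Fermat's little theorem, which provides a useful sanity check.
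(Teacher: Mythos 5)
Your proposal is correct and follows essentially the same route as the paper: both expand $\binom{pr}{p}_f$ via \eqref{eq:repr}, use \eqref{eq:multi} to kill every tuple with $\gcd(k_0,\ldots,k_p)=1$ modulo $pr$, and observe that the gcd must be $1$ or $p$ with the value $p$ forcing the unique tuple $k_0=p(r-1)$, $k_1=p$. No gaps.
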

\begin{proof}
From \eqref{eq:repr},
$\binom{pr}{p}_{f}$ can be written as 
\begin{align}\label{eq:help}
  \binom{pr}{p}_{f} =
  \sum_{\substack{k_0+\cdots+k_p=pr,\\ 0\cdot k_0+\cdots+p\cdot
      k_p=p}}\binom{pr}{k_0,\ldots,k_p}\prod_{s=0}^{p} f(s)^{k_s}. 
\end{align}
For a term in the sum, 
either $d=\gcd(k_0,\ldots,k_p)=1$ or $d=p$, since otherwise, if $1<d<p$, 
then $d\cdot (0\cdot k_0/d+\cdots p\cdot k_p/d)=p$, whence $p$ is composite, a contradiction.
Those terms on the RHS of \eqref{eq:help} for which $d=1$ contribute
nothing to the sum modulo $pr$, by \eqref{eq:multi}, so they
can be ignored.  But, from the equation $0\cdot k_0+1\cdot k_1+\cdots
p\cdot k_p=p$, the case $d=p$ precisely happens when $k_1=p$,
$k_2=\cdots=k_p=0$ and when $k_0=p(r-1)$ (from the equation
$k_0+\cdots+k_p=pr$),
whence, as
required, $\binom{pr}{p}_{f}\equiv
f(0)^{p(r-1)}f(1)^p\binom{pr}{p}\pmod{pr}$. 
\end{proof}

Recall that the ordinary binomial coefficients satisfy Lucas'
theorem, namely,
\begin{align*}
  \binom{k}{n} \equiv \prod\binom{k_i}{n_i}\pmod{p},
\end{align*}
whenever $k=\sum n_ip^i$ and $n=\sum k_ip^i$ with $0\le
n_i,k_i<p$. An analogous result has been established in Bollinger and
Burchard \cite{Bollinger:1990} for 
the classical 
{extended binomial coefficients}, 
the coefficients 
of $\left(1+x+\ldots+x^m\right)^k$. We
straightforwardly extend their result for our more general situation
of arbitrarily weighted integer compositions (general extended
binomial coefficients). 
\begin{theorem}[Lucas' theorem]
  Let $p$ be a prime and let $n=\sum_{i=0}^t n_ip^i$ and $k=\sum_{j=0}^r k_jp^j$, where $0\le n_i,k_j<p$. Then
  \begin{align*}
   \binom{k}{n}_{f} \equiv \sum_{(s_0,\ldots,s_r)}\prod_{i=0}^r \binom{k_i}{s_i}_{f}\pmod{p},
  \end{align*}
  whereby the sum is over all $(s_0,\ldots,s_r)$ that satisfy
  $s_0+s_1p+\cdots+s_rp^r=n$. 
\end{theorem}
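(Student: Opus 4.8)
The plan is to pass to generating functions and deploy the ``freshman's dream'' already recorded in the corollary following Theorem~\ref{theorem:prime1}. Write $F(x)=\sum_{s\in\nn}f(s)x^s$, so that by definition $\binom{k}{n}_f=[x^n]F(x)^k$. That corollary states precisely that $F(x)^p\equiv F(x^p)\pmod p$ as formal power series over $\z$ (since $\sum_{r}f(r)x^{pr}=F(x^p)$). My strategy is to boost this to $F(x)^{p^j}\equiv F(x^{p^j})\pmod p$ for every $j$, factor $F(x)^k$ according to the base-$p$ digits of $k$, reduce each factor modulo $p$, and then read off the coefficient of $x^n$.

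First I would establish the iterate $F(x)^{p^j}\equiv F(x^{p^j})\pmod p$ by induction on $j$, the case $j=1$ being the quoted corollary. The inductive step rests on the elementary fact that $A(x)\equiv B(x)\pmod p$ implies $A(x)^p\equiv B(x)^p\pmod p$ (write $A=B+pC$ and expand, all cross terms carrying a factor of $p$); applying this to $A=F(x)^{p^{j-1}}$, $B=F(x^{p^{j-1}})$ gives $F(x)^{p^j}\equiv F(x^{p^{j-1}})^p\pmod p$, and a further application of the corollary with $x$ replaced by $x^{p^{j-1}}$ yields $F(x^{p^{j-1}})^p\equiv F(x^{p^j})\pmod p$.

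Next, using $k=\sum_{j=0}^r k_jp^j$ I would factor
\begin{align*}
F(x)^k=\prod_{j=0}^r\bigl(F(x)^{p^j}\bigr)^{k_j}\equiv\prod_{j=0}^r F(x^{p^j})^{k_j}\pmod p.
\end{align*}
Since $F(y)^{k_j}=\sum_{m}\binom{k_j}{m}_f y^m$, substituting $y=x^{p^j}$ gives $F(x^{p^j})^{k_j}=\sum_{s_j}\binom{k_j}{s_j}_f x^{s_jp^j}$. Extracting the coefficient of $x^n$ from the product then produces exactly $\sum_{(s_0,\ldots,s_r)}\prod_{i=0}^r\binom{k_i}{s_i}_f$, the sum ranging over all tuples with $s_0+s_1p+\cdots+s_rp^r=n$, which is the claimed right-hand side.

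I do not expect a serious obstacle: the content is entirely carried by the corollary, and the remaining work is the bookkeeping of the two routine reductions (the iteration and the coefficient extraction). The one point that deserves emphasis --- and the only place where the statement genuinely departs from the classical Lucas theorem --- is that here the $s_i$ range over all of $\nn$ rather than over base-$p$ digits, because $\binom{k_i}{s_i}_f$ need not vanish for $s_i\ge p$; this is why the right-hand side is a genuine sum over all representations $s_0+s_1p+\cdots+s_rp^r=n$ and does not collapse to the single base-$p$ expansion $n=\sum_i n_ip^i$ as it does for ordinary binomial coefficients.
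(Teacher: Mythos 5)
Your proof is correct and follows essentially the same route as the paper: factor the generating function $F(x)^k$ along the base-$p$ digits of $k$, reduce each factor via $F(x)^{p^j}\equiv F(x^{p^j})\pmod p$, and compare coefficients of $x^n$. The only difference is that the paper obtains the key congruence by citing Theorem~\ref{theorem:prime2} directly, whereas you re-derive it by induction from the corollary to Theorem~\ref{theorem:prime1}; this is a harmless (indeed self-contained) variant of the same argument.
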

\begin{proof}
  \begin{align*}
    \sum_{n\ge 0}\binom{k}{n}_fx^n &= \left(\sum_{s\ge
      0}f(s)x^s\right)^k = \prod_{j=0}^r\left(\sum_{s\ge
      0}f(s)x^s\right)^{k_jp^j}
    \equiv \prod_{j=0}^r \left(\sum_{s\ge 0}f(s)x^{p^js}\right)^{k_j}
    \\&= \prod_{j=0}^r\left(\sum_{m\ge
      0}\binom{k_j}{m}_fx^{p^jm}\right) = \sum_{n\ge
      0}\left(\sum_{(s_0,\ldots,s_r)}\binom{k_0}{s_0}_f\cdots\binom{k_r}{s_r}_f\right)x^n\pmod{p},
  \end{align*}
  where the third equality follows from Theorem \ref{theorem:prime2},
  and the theorem follows by comparing the coefficients of $x^n$. 
\end{proof}

Finally, we conclude this section with a theorem given in Granville
\cite{Granville:1997} which 
allows a `fast computation' of $\binom{k}{n}_f$ modulo a prime.
\begin{theorem}
Let $p$ be a prime. Then, 
\begin{align*}
\binom{k}{n}_{f} \equiv \sum_{m\ge 0}\binom{\lfloor k/p\rfloor}{\lfloor n/p\rfloor -m}_{f}\binom{k_0}{n_0+mp}_f\pmod{p},
\end{align*}
whereby $n_0$ and $k_0$ are the remainders when dividing $n$ and $k$ by $p$. 
\end{theorem}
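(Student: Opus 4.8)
The plan is to mimic the generating-function argument used for Lucas' theorem, but to apply the Frobenius-type congruence only once rather than recursively across all base-$p$ digits. I would write $k = p\lfloor k/p\rfloor + k_0$ and $n = p\lfloor n/p\rfloor + n_0$ with $0 \le k_0, n_0 < p$, and abbreviate the generating series by $F(x) = \sum_{s\ge 0}f(s)x^s$, so that $\sum_{n\ge 0}\binom{k}{n}_f x^n = F(x)^k$ by Theorem \ref{theorem:main}. I would then factor $F(x)^k = \bigl(F(x)^p\bigr)^{\lfloor k/p\rfloor}\, F(x)^{k_0}$.

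First I would invoke the corollary generalizing $(1+x)^p \equiv 1+x^p$, namely $F(x)^p \equiv \sum_{r\ge 0}f(r)x^{pr} \pmod{p}$, to rewrite the inner factor, giving
\[
  F(x)^k \equiv \Bigl(\sum_{r\ge 0}f(r)x^{pr}\Bigr)^{\lfloor k/p\rfloor} F(x)^{k_0}\pmod{p}.
\]
The crucial point is that the exponent $k_0$ of the untouched factor $F(x)^{k_0}$ is strictly less than $p$, so no further reduction is performed on it; this is exactly what yields a single-step formula rather than the fully recursive Lucas product.

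Next I would expand both factors as power series. Setting $y = x^p$, the first factor equals $\sum_{j\ge 0}\binom{\lfloor k/p\rfloor}{j}_f y^j = \sum_{j\ge 0}\binom{\lfloor k/p\rfloor}{j}_f x^{pj}$, again by Theorem \ref{theorem:main}, while the second equals $\sum_{i\ge 0}\binom{k_0}{i}_f x^i$. Multiplying and collecting the coefficient of $x^n$ requires $pj + i = n = p\lfloor n/p\rfloor + n_0$; since $0 \le n_0 < p$, reduction modulo $p$ forces $i \equiv n_0 \pmod{p}$, so $i = n_0 + mp$ and correspondingly $j = \lfloor n/p\rfloor - m$ for an integer $m$. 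Substituting and summing recovers the claimed expression $\sum_{m\ge 0}\binom{\lfloor k/p\rfloor}{\lfloor n/p\rfloor - m}_f\binom{k_0}{n_0 + mp}_f$.

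The only care needed --- and the step I would watch most closely --- is the bookkeeping of the summation range: I must check that terms with $m < 0$ contribute nothing (for such $m$ one has $n_0 + mp < 0$, whence $\binom{k_0}{n_0+mp}_f = 0$) and that terms with $m > \lfloor n/p\rfloor$ also vanish (there $\lfloor n/p\rfloor - m < 0$, whence $\binom{\lfloor k/p\rfloor}{\lfloor n/p\rfloor - m}_f = 0$), so that the sum may legitimately be written over all $m \ge 0$. Everything else is routine coefficient comparison, and I do not anticipate a genuine obstacle beyond this index accounting.
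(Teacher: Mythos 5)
Your proposal is correct and follows essentially the same route as the paper: both factor $F(x)^k=\bigl(F(x)^p\bigr)^{\lfloor k/p\rfloor}F(x)^{k_0}$, reduce the first factor via $F(x)^p\equiv\sum_{r}f(r)x^{pr}\pmod{p}$ (Theorem \ref{theorem:prime1}, equivalently the corollary you cite), and compare coefficients of $x^n$, re-indexing $i=n_0+mp$. Your extra care about the summation range, and the observation that no bound on $\lfloor k/p\rfloor$ is needed, are fine but do not change the argument.
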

\begin{proof}
  We have
  \begin{align*}
    \left(\sum_{s\ge 0}f(s)x^s\right)^p \equiv \sum_{s\ge 0}
    f(s)x^{ps} \pmod{p}
  \end{align*}
  by Theorem \ref{theorem:prime1} and therefore, with $k=k_0+k_1p$,
  for $0\le k_0,k_1<p$, 
  \begin{align*}
    \left(\sum_{s\ge 0}f(s)x^s\right)^{k_0+k_1p} &\equiv
    \left(\sum_{s\ge 0}f(s)x^s\right)^{k_0} \left(\sum_{s\ge 0}
    f(s)x^{ps}\right)^{\lfloor k/p \rfloor} \\
    & =
    \sum_{r,t\ge 0}\binom{\lfloor
      k/p\rfloor}{t}_f\binom{k_0}{r}_fx^{pt+r}
    \pmod{p}.
  \end{align*}
  Now, since $\binom{k}{n}_f$ is the coefficient of $x^n$ of
  $\left(\sum_{s\ge 0}f(s)x^s\right)^{k_0+k_1p}$, 
  \begin{align*}
    \binom{k}{n}_f \equiv \sum_{pt+r=n}\binom{\lfloor
      k/p\rfloor}{t}_f\binom{k_0}{r}_f\pmod{p},
  \end{align*}
  and the theorem follows after re-indexing the summation on the RHS. 
\end{proof}

\section{Divisibility of the number of $f$-weighted integer compositions of $n$ with
  arbitrary number of parts $k$, and where $n\in A$}\label{sec:arbitrary}
Here, we (briefly) consider divisibility properties for the number
$c_f(n)$ of integer compositions with 
arbitrary number of parts, i.e., $c_f(n)=\sum_{k\ge 0}
\binom{k}{n}_f$, and, in Theorems \ref{theorem:glashier} and
\ref{theorem:rowsum}, 
particular divisibility properties 
for 
the total number of all $f$-weighted integer compositions of $n\in A$,
for sets $A$, with fixed
number $k$ of parts, i.e., $\sum_{n\in A}\binom{k}{n}_f$. 

First, it is easy to establish that $c_f(n)$ is a
`generalized Fibonacci sequence', 
satisfying a weighted linear
recurrence where the weights are given by $f$. 
\begin{theorem}\label{theorem:recurrence}
  For $n\ge 1$ we have that
  \begin{align*}
    c_f(n) = \sum_{m\in\nn} f(m)c_f(n-m),
  \end{align*}
  where we define $c_f(0)=1$ and $c_f(n)=0$ if $n<0$.  
\end{theorem}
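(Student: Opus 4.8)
The plan is to recognize that $c_f(n)$ counts all $f$-weighted integer compositions of $n$ with any number of parts, and to derive the recurrence by conditioning on the \emph{first} part of the composition. Concretely, I would argue combinatorially: every $f$-weighted composition of $n\ge 1$ has at least one part (the empty composition only represents $n=0$), so I can uniquely decompose it into its first part together with the remaining composition. If the first part has size $m$, it may appear in $f(m)$ different colors, and the rest is an $f$-weighted composition of $n-m$ with arbitrarily many parts, of which there are $c_f(n-m)$. Summing over all possible first-part sizes $m\in\nn$ with $0\le m\le n$ gives $c_f(n)=\sum_{m\in\nn} f(m)c_f(n-m)$, where the terms with $m>n$ vanish because $c_f(n-m)=0$ for negative argument, consistent with the stated convention.

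Alternatively, and perhaps more in the spirit of the algebraic machinery already developed, I would give a generating-function proof. Let $F(x)=\sum_{s\in\nn}f(s)x^s$. By Theorem~\ref{theorem:main}, $\sum_{n\ge 0}\binom{k}{n}_f x^n = F(x)^k$, so summing over all $k\ge 0$ yields
\begin{align*}
  \sum_{n\ge 0} c_f(n)x^n = \sum_{k\ge 0}F(x)^k = \frac{1}{1-F(x)},
\end{align*}
valid as a formal power series provided $f(0)=0$ so that $F(x)$ has no constant term (otherwise one works in the appropriate completed ring or assumes the relevant convergence). Clearing denominators gives $\bigl(1-F(x)\bigr)\sum_{n\ge 0}c_f(n)x^n = 1$, and extracting the coefficient of $x^n$ for $n\ge 1$ produces exactly $c_f(n)-\sum_{m\in\nn}f(m)c_f(n-m)=0$, which is the claimed recurrence; the coefficient of $x^0$ recovers the initial condition $c_f(0)=1$.

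The only genuinely delicate point is convergence and well-definedness of the infinite sum $\sum_{k\ge 0}\binom{k}{n}_f$ and of $\sum_k F(x)^k$. When $f(0)\neq 0$, infinitely many values $\binom{k}{n}_f$ are nonzero (indeed $\binom{k}{0}_f=f(0)^k$ by Lemma~\ref{lemma:f0}), so $c_f(n)$ need not be a finite number and the geometric series need not converge formally. I expect this to be the main obstacle, and I would handle it either by restricting to the standard case $f(0)=0$ (under which each $c_f(n)$ is a finite sum, since $\binom{k}{n}_f=0$ whenever $k>n$), or by noting that the recurrence itself holds as an identity of formal power series regardless, with the $f(0)$ term simply contributing to both sides. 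Given the convention $c_f(n)=0$ for $n<0$ and $c_f(0)=1$ stated in the theorem, the combinatorial first-part decomposition is the cleanest route and sidesteps the convergence bookkeeping, so I would lead with that and mention the generating-function identity as a compact second proof.
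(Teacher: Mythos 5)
Your proposal is correct and its lead argument — decomposing a composition of $n\ge 1$ by splitting off one distinguished part of size $m$ in one of $f(m)$ colors, leaving an arbitrary composition of $n-m$ — is essentially the same as the paper's proof, which conditions on the last part rather than the first. Your additional generating-function derivation and the discussion of the $f(0)=0$ convention (which the paper handles in a separate remark immediately after the theorem) are consistent with the paper but not needed.
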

\begin{proof}
  An $f$-weighted integer composition of $n$ may end, in its last
  part, with one of the values $m=0,1,2,\ldots,n$, and $m$ may be
  colored in $f(m)$ different colors.
\end{proof}
\begin{remark}
  Of course, when $f(0)>0$, then $c_f(n)>0\implies c_f(n)=\infty$ for
  all positive 
  $n$. Hence, in the remainder, we assume that $f(0)=0$. 
\end{remark}
In special cases, e.g., when $f$ is the indicator function of
particular sets 
$B\subseteq \nn$, that is, $f(s)=\mathbbm{1}_{B}(s)=\begin{cases}1, & \text{if }
s\in B;\\ 0, & \text{otherwise}\end{cases}$, it is well-known that
$c_f(n)$ is 
closely related to the ordinary Fibonacci numbers $F_n$. For example
(see, e.g., Shapcott \cite{Shapcott:2013}):  
\begin{align*}
  c_f(n) &= F_{n+1},\quad\text{for } f=\mathbbm{1}_{\set{1,2}},\\
  c_f(n) &= F_{n-1},\quad\text{for } f=\mathbbm{1}_{\nn\wo\set{0,1}},\\
  c_f(n) &= F_n,\quad\text{for } f=\mathbbm{1}_{\set{n\in\nn \sd n
    \text{ is odd}}},\\
  c_f(n) &= F_{2n},\quad\text{for } f(s)=s=\text{Id}(s).
\end{align*}
Accordingly, it immediately follows that $c_f(n)$, in these cases,
satisfies the corresponding divisibility properties of the Fibonacci
numbers, such as the following well-known properties. 
\begin{theorem}\label{theorem:fib}
  Let $p$ be prime. Then
  \begin{align*}
    c_{\mathbbm{1}_{\set{1,2}}}(p-1)\equiv 
    c_{\mathbbm{1}_{\nn\wo\set{0,1}}}(p+1)
    \equiv c_{\mathbbm{1}_{\{n\in\nn \sd n
    \text{ is odd}\}}}(p) 
    \equiv \begin{cases}
        0,&\text{if } p=5;\\
        1,&\text{if } p\equiv\pm 1\pmod{5};\\
        -1,&\text{if } p\equiv\pm 2\pmod{5}.
      \end{cases}
    \pmod{p}.
  \end{align*}
  Moreover,
  \begin{align*}
    &
    \gcd{\bigl(c_{\mathbbm{1}_{\set{1,2}}}(m),c_{\mathbbm{1}_{\set{1,2}}}(n)\bigr)}=c_{\mathbbm{1}_{\set{1,2}}}(\gcd(m+1,n+1)-1), 
    \\
    &\gcd{\bigl(c_{\mathbbm{1}_{\nn\wo\set{0,1}}}(m),c_{\mathbbm{1}_{\nn\wo\set{0,1}}}(n)\bigr)} 
    = c_{\mathbbm{1}_{\nn\wo\set{0,1}}}(\gcd{(m-1,n-1)}+1),\\
    &\gcd\bigl(c_{\mathbbm{1}_{\{n\in\nn \sd n
    \text{ is odd}\}}}(m),c_{\mathbbm{1}_{\{n\in\nn \sd n
    \text{ is odd}\}}}(n)\bigr)  = c_{\mathbbm{1}_{\{n\in\nn \sd n
    \text{ is odd}\}}}(\gcd(m,n)),\\
   &\gcd\bigl(c_{\text{Id}}(m),c_{\text{Id}}(n)\bigr)=c_{\text{Id}}(\gcd{(m,n)}). 
  \end{align*}
\end{theorem}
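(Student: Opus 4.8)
The plan is to deduce both statements from two classical facts about the ordinary Fibonacci numbers, using the explicit identifications of the various $c_f(n)$ with shifted Fibonacci numbers recorded immediately before the theorem. The point is that, under these identifications, the three quantities in the first display all collapse to a single Fibonacci number, and the four GCD identities all reduce to one standard lemma; so the theorem is really a repackaging rather than a new computation.

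For the congruence, first observe that
\[
c_{\mathbbm{1}_{\set{1,2}}}(p-1)=F_{(p-1)+1}=F_p,\qquad
c_{\mathbbm{1}_{\nn\wo\set{0,1}}}(p+1)=F_{(p+1)-1}=F_p,
\]
and $c_{\mathbbm{1}_{\{n\in\nn\sd n\text{ is odd}\}}}(p)=F_p$, so all three sides equal $F_p$ and it suffices to prove
\[
F_p\equiv\left(\tfrac{p}{5}\right)\pmod{p},
\]
where $\left(\tfrac{p}{5}\right)$ is the Legendre symbol. Since $5\equiv 1\pmod 4$, quadratic reciprocity gives $\left(\tfrac{p}{5}\right)=\left(\tfrac{5}{p}\right)$, and this symbol takes the values $0,1,-1$ exactly according to whether $p=5$, $p\equiv\pm 1\pmod 5$, or $p\equiv\pm 2\pmod 5$, matching the three cases in the statement. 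To prove the displayed congruence I would start from the classical identity $2^{p-1}F_p=\sum_{k\ge 0}\binom{p}{2k+1}5^k$: every $\binom{p}{2k+1}$ with $0<2k+1<p$ vanishes modulo $p$, leaving only the term $k=(p-1)/2$, so $2^{p-1}F_p\equiv 5^{(p-1)/2}\pmod p$. Fermat's little theorem removes the factor $2^{p-1}$ and Euler's criterion identifies $5^{(p-1)/2}\equiv\left(\tfrac{5}{p}\right)$, giving the claim; the small primes $p=2,5$ are checked directly (for $p=5$ one has $F_5=5\equiv 0$).

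For the GCD identities I would invoke the classical lemma $\gcd(F_a,F_b)=F_{\gcd(a,b)}$ and transport each shift through it. Thus $\gcd(c_{\mathbbm{1}_{\set{1,2}}}(m),c_{\mathbbm{1}_{\set{1,2}}}(n))=\gcd(F_{m+1},F_{n+1})=F_{\gcd(m+1,n+1)}=c_{\mathbbm{1}_{\set{1,2}}}(\gcd(m+1,n+1)-1)$, and similarly $\gcd(F_{m-1},F_{n-1})=F_{\gcd(m-1,n-1)}$ handles the second identity and $\gcd(F_m,F_n)=F_{\gcd(m,n)}$ the third. For the identity-weight case one additionally uses $\gcd(2m,2n)=2\gcd(m,n)$, so $\gcd(F_{2m},F_{2n})=F_{2\gcd(m,n)}=c_{\text{Id}}(\gcd(m,n))$. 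The lemma itself rests on the addition formula $F_{m+n}=F_{m+1}F_n+F_mF_{n-1}$, the coprimality of consecutive Fibonacci numbers, and the Euclidean algorithm.

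No step here is a genuine obstacle: once the shift dictionary established earlier is in hand, the entire proof is bookkeeping of index shifts plus two standard Fibonacci results. The only mild subtleties are getting every shift right in the four GCD cases and the separate verification of the small primes $p=2,5$ in the congruence. Both the congruence $F_p\equiv\left(\tfrac{5}{p}\right)\pmod p$ and the identity $\gcd(F_a,F_b)=F_{\gcd(a,b)}$ are classical enough that I would cite them rather than reprove them in full.
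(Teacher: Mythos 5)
Your proof is correct and follows essentially the same route as the paper, which gives no explicit proof at all: it simply notes that the stated identifications $c_f(n)=F_{n+1},F_{n-1},F_n,F_{2n}$ reduce everything to well-known Fibonacci properties. Your write-up merely fills in the standard facts ($F_p\equiv\bigl(\tfrac{5}{p}\bigr)\pmod{p}$ and $\gcd(F_a,F_b)=F_{\gcd(a,b)}$) and the index bookkeeping, all of which checks out.
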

\begin{remark}
  Note how Theorem \ref{theorem:fib} implies several
  interesting properties, such as $3\mid c_{\text{Id}}(4m)$ (since
  $\gcd(4m,2)=2$ and $c_{\text{Id}}(2)=3$, as $2=1+1=2^1=2^2$) or,
  similarly, $7\mid c_{\text{Id}}(4m)$, which 
  otherwise also follow from well-known congruence relationships for
  Fibonacci numbers.  
\end{remark}

When $f$ is arbitrary but zero almost everywhere ($f(x)=0$ for all
$x>m$, 
for some $m\in\nn$), then by Theorem 
\ref{theorem:recurrence}, $c_f(n)$ satisfies an $m$-th order
linear recurrence, given by
\begin{align*}
  c_f(n+m) = f(1)c_f(n+m-1)+\cdots+f(m)c_f(n). 
\end{align*}
For such sequences, Somer \cite[Theorem 4]{Somer:1987}, for instance,
states a 
congruence 
relationship which we can 
immediately apply to our situation, leading to:
\begin{theorem}\label{theorem:recgeneral}
  Let $p$ be a prime and let $b$ a nonnegative integer. Let
  $f:\nn\goesto\nn$ be zero almost everywhere, i.e., $f(x)=0$ for all
  $x>m$. Then
  \begin{align*}
    c_f(n+mp^b)\equiv
    f(1)c_f(n+(m-1)p^b)+f(2)c_f(n+(m-2)p^b)+\cdots+f(m)c_f(n)\pmod{p}. 
  \end{align*}
\end{theorem}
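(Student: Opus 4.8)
The plan is to prove the congruence by a self-contained generating-function argument over $\mathbb{F}_p$, rather than quoting Somer's general recurrence theorem as a black box. First I would record, from Theorem~\ref{theorem:recurrence} together with the standing assumptions $f(0)=0$ and $f(s)=0$ for $s>m$, that the ordinary generating function satisfies $F(x):=\sum_{n\ge 0}c_f(n)x^n = 1/Q(x)$ in $\mathbb{Z}[[x]]$, where $Q(x)=1-\sum_{s=1}^m f(s)x^s$ is a polynomial of degree at most $m$ with constant term $1$; the standard Cauchy-product computation turns the recurrence $c_f(n)=\sum_s f(s)c_f(n-s)$ into $F(x)Q(x)=1$. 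Since $Q(0)=1$, the series $F$ and its reduction modulo $p$ are invertible in $\mathbb{F}_p[[x]]$. The target recurrence is then exactly the statement that the coefficient of $x^N$ in $F(x)\,Q(x^{p^b})$ vanishes modulo $p$ for every $N\ge mp^b$: writing out that coefficient gives $c_f(N)-\sum_{s=1}^m f(s)c_f(N-sp^b)$, and the substitution $N=n+mp^b$ recovers the claimed identity (no negative-index terms interfere, as $c_f$ vanishes on the negative integers).

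The key step is the Frobenius congruence $Q(x^{p^b})\equiv Q(x)^{p^b}\pmod{p}$. To establish it, I would use that the $p$-th power map is a ring endomorphism of $\mathbb{F}_p[[x]]$, so that $Q(x)^p\equiv 1-\sum_{s=1}^m f(s)^p x^{ps}\pmod{p}$, and then invoke Fermat's little theorem $f(s)^p\equiv f(s)\pmod{p}$ on the integer weights to get $Q(x)^p\equiv Q(x^p)\pmod{p}$; iterating this identity $b$ times yields $Q(x)^{p^b}\equiv Q(x^{p^b})\pmod{p}$.

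With these two ingredients the proof closes almost mechanically. Working in $\mathbb{F}_p[[x]]$ and combining $F(x)Q(x)=1$ with the Frobenius identity gives
\begin{align*}
  F(x)\,Q(x^{p^b}) \equiv F(x)\,Q(x)^{p^b} = \frac{Q(x)^{p^b}}{Q(x)} = Q(x)^{p^b-1}\pmod{p}.
\end{align*}
The right-hand side is an honest polynomial of degree at most $m(p^b-1)=mp^b-m$, so (using $m\ge 1$) its coefficient of $x^N$ is zero for every $N\ge mp^b$. Reading off the coefficient of $x^N$ on the left-hand side then yields $c_f(N)\equiv\sum_{s=1}^m f(s)c_f(N-sp^b)\pmod{p}$ for all $N\ge mp^b$, which is the theorem after setting $N=n+mp^b$.

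I expect the only genuinely delicate point to be the degree bookkeeping in the last paragraph — one must check that $mp^b-m<mp^b$ leaves no surviving coefficient at index $mp^b$ or beyond, which is where the hypothesis $m\ge 1$ enters. The Frobenius congruence, although it is the conceptual heart of the argument, is entirely elementary, and the translation between the recurrence and the vanishing of power-series coefficients is routine.
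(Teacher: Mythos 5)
Your proof is correct, and it takes a genuinely different route from the paper: the paper gives no argument of its own here, but simply invokes Theorem 4 of Somer \cite{Somer:1987} on congruences for $m$-th order linear recurrences and observes that $c_f$ satisfies such a recurrence by Theorem \ref{theorem:recurrence} (together with the standing assumptions $f(0)=0$ and $f(x)=0$ for $x>m$). You instead give a self-contained derivation: the identity $F(x)Q(x)=1$ with $Q(x)=1-\sum_{s=1}^m f(s)x^s$, the Frobenius congruence $Q(x^{p^b})\equiv Q(x)^{p^b}\pmod{p}$ via Fermat's little theorem on the weights, and the degree bound $\deg Q(x)^{p^b-1}\le mp^b-m<mp^b$, which forces the coefficient of $x^N$ in $F(x)Q(x^{p^b})$ to vanish modulo $p$ for $N\ge mp^b$ --- and that coefficient is exactly $c_f(N)-\sum_{s=1}^m f(s)c_f(N-sp^b)$. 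All the steps check out, including the observation that no negative indices occur (since $N-sp^b\ge(m-s)p^b\ge0$) and that the argument survives $f(m)=0$, where $Q$ merely has smaller degree. What your approach buys is independence from an external reference and a transparent explanation of \emph{why} the congruence holds (it is the Frobenius endomorphism acting on the characteristic polynomial); what the paper's approach buys is brevity and an explicit pointer to the general theory of such congruences for arbitrary linear recurrences, of which this is a special case. One could fold your argument into the paper essentially verbatim as a proof of the cited fact in the case at hand.
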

\begin{example}
  Let $f(1)=1$, $f(2)=3$, $f(3)=0$, $f(4)=2$. 
  Let $p=5$ and $x=20=n+mp=0+4\cdot 5$. Then,  
  \begin{align*}
    f(1)c_{f}(15)+f(2)c_{f}(10)+f(3)c_{f}(5)+f(4)c_{f}(0) &=
    290375+3\cdot 3693+0\cdot 44+2\cdot 1\\ &\equiv
    11081\equiv 1\pmod{5},
  \end{align*}
  and, indeed, $c_f(20)=22,985,976\equiv 1\pmod{5}$.
\end{example}
\begin{example}
  When $f$ `avoids' a fixed arithmetic sequence, i.e., $f(s)=1$
  whenever $s\notin\set{a+mj\sd j\in\nn}$, for $a,m\in\nn$ fixed, and
  otherwise $f(s)=0$, then
  $c_f(n)$ likewise satisfies a linear recurrence \cite{Robbins:toappear}, namely,
  \begin{align*}
    c_f(n+m) &=
    c_f(n+m-1)+\cdots+c_f(n+m-a+1)+c_f(n+m-a-1)+\cdots
    \\ &+c_f(n+1)+2c_f(n), 
  \end{align*}
  and so Theorem \ref{theorem:recgeneral} applies likewise. 
\end{example}

Finally, we consider the number of $f$-weighted compositions, with
fixed number of parts, of \emph{all} numbers $n$ in some particular
sets $A$. Introduce the following notation:
\begin{align*}
  {k\brack r}_{m,f} = \sum_{\overset{n\ge 0}{n\equiv r\pmod{m}}}\binom{k}{n}_{f}.
\end{align*}
Note that ${k\brack r}_{m,f}$ generalizes the usual binomial sum
notation (cf.\ Sun \cite{Sun:2007}). In our context, ${k\brack r}_{m,f}$ denotes
the number of  
compositions, with $k$ parts, of $n\in A=\set{y\sd y\equiv
  r\pmod{m}}$. We note that, by the Vandermonde convolution,
${k\brack r}_{m,f}$ satisfies  
\begin{align}\label{eq:vand_sum}
  {k \brack r}_{m,f} = \sum_{s\ge 0} f(s){k-1\brack r-s}_{m,f}.
\end{align}

Our first theorem in this context goes back to J.\ W.\ L.\ Glaisher, 
and 
its proof is 
inspired by the corresponding proof for binomial sums due to Sun
(cf.\ Sun \cite{Sun:2007}, and references therein). 
\begin{theorem}[Glaisher]\label{theorem:glashier}
For any prime $p\equiv 1\pmod{m}$ and any $k\ge 1$, 
\begin{align*}
{k+p-1\brack r}_{m,f}\equiv {k\brack r}_{m,f}\pmod{p}.
\end{align*}
\end{theorem}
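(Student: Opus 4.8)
The plan is to split the $k+p-1$ parts into a block of $k-1$ parts and a block of $p$ parts, to reduce the $p$-part block modulo $p$ using Theorem~\ref{theorem:prime1}, and then to exploit the hypothesis $p\equiv 1\pmod m$ to recognize the resulting expression as ${k\brack r}_{m,f}$. Concretely, I would first apply the Vandermonde convolution~\eqref{eq:vandermonde} with two blocks, of sizes $k-1$ and $p$ (legitimate since $k\ge 1$, so that $k-1\ge 0$), to write
\begin{align*}
\binom{k+p-1}{n}_f=\sum_{a+b=n}\binom{k-1}{a}_f\binom{p}{b}_f,
\end{align*}
and then sum this over all $n\equiv r\pmod m$, so that ${k+p-1\brack r}_{m,f}$ appears as a double sum.

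Next I would reduce the inner factor modulo $p$. By Theorem~\ref{theorem:prime1}, $\binom{p}{b}_f\equiv 0\pmod p$ unless $b=pt$ for some $t$, in which case $\binom{p}{pt}_f\equiv f(t)\pmod p$. Substituting $b=pt$ and discarding the vanishing terms, the double sum becomes, modulo $p$,
\begin{align*}
{k+p-1\brack r}_{m,f}\equiv\sum_{\substack{n\ge 0\\ n\equiv r\pmod m}}\ \sum_{a+pt=n}f(t)\binom{k-1}{a}_f\pmod p.
\end{align*}
I would then interchange the order of summation, fixing $t$ first, so that the remaining constraint on $a$ is $a+pt\equiv r\pmod m$.

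This is exactly where the hypothesis enters: since $p\equiv 1\pmod m$ we have $pt\equiv t\pmod m$, so the condition $a+pt\equiv r\pmod m$ collapses to $a\equiv r-t\pmod m$. Hence the inner sum over $a$ is precisely ${k-1\brack r-t}_{m,f}$, giving
\begin{align*}
{k+p-1\brack r}_{m,f}\equiv\sum_{t\ge 0}f(t){k-1\brack r-t}_{m,f}\pmod p,
\end{align*}
which by the summed Vandermonde identity~\eqref{eq:vand_sum} equals ${k\brack r}_{m,f}$, completing the argument and mirroring Sun's structure for ordinary binomial sums. The only genuinely substantive step is the single congruence $pt\equiv t\pmod m$; everything else is bookkeeping. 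The main point requiring care is to justify the rearrangement of the (a priori infinite) summations and to ensure the residue-class sums are well-defined — this is immediate when $f$ has finite support, and otherwise the chain of identities should be read term-by-term at the level of the underlying generating functions.
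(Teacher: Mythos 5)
Your proof is correct, and it reaches the conclusion by a genuinely more direct route than the paper. The paper argues by induction on $k$: it first establishes the base case $k=1$ by combining Theorem~\ref{theorem:prime1} with the observation that $pq\equiv q\pmod{m}$ when $p\equiv 1\pmod{m}$, and then passes from $k-1$ to $k$ by applying the recurrence \eqref{eq:vand_sum} to both sides of the congruence. You instead perform a single Vandermonde split of the $k+p-1$ parts into a block of $k-1$ and a block of $p$, reduce the entire $p$-block at once via Theorem~\ref{theorem:prime1} (so that it behaves, modulo $p$ and modulo the residue class, exactly like one additional $f$-weighted part), and then invoke \eqref{eq:vand_sum} only at the very end to recognize $\sum_{t\ge 0}f(t){k-1\brack r-t}_{m,f}$ as ${k\brack r}_{m,f}$. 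The two arguments use identical ingredients --- Theorem~\ref{theorem:prime1}, the hypothesis $p\equiv 1\pmod m$ in the form $pt\equiv t\pmod m$, and the convolution identities --- but your version makes the underlying mechanism more transparent (a block of $p$ parts collapses to a single weighted part modulo $p$) and dispenses with the induction entirely; the paper's version keeps each step elementary at the cost of hiding this one-shot structure inside the inductive unrolling. Your closing caveat about rearranging the a priori infinite double sum is well taken and is in fact needed for the statement to be meaningful at all (the quantities ${k\brack r}_{m,f}$ are finite only when, e.g., $f$ has finite support), a point the paper passes over silently.
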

\begin{proof}
For $k=1$, 
\begin{align*}
{p \brack r}_{m,f} &= \sum_{n\ge 0, n\equiv r\pmod{m}}
\binom{p}{n}_f\equiv \sum_{q\ge 0,n=pq,n\equiv q\equiv
  r\pmod{m}}\binom{p}{pq}_f
\\ &\equiv \sum_{q\ge 0,q\equiv r\pmod{m}}f(q)\pmod{p},
\end{align*}
by Theorem \ref{theorem:prime1}, and, moreover, ${1\brack r}_{m,f}=\sum_{y\ge 0,y\equiv r\pmod{m}}f(y)\pmod{p}$ by definition. For $k>1$, the result follows by induction using \eqref{eq:vand_sum}. 
\end{proof}

\begin{theorem}\label{theorem:rowsum}
Let $f(s)=0$ for almost all $s\in\nn$. 
Consider ${k\brack 0}_{1,f}$, the row sum in row $k$, or,
equivalently, the 
total number of $f$-weighted
                  compositions with $k$ parts.   
Let $M=\sum_{s\ge 0}f(s)$. Then
\begin{align*}
  {k\brack 0}_{1,f} \equiv M\pmod{2}
\end{align*}
for all $k>0$. 
\end{theorem}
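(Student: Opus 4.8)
The plan is to reduce the claim to the elementary fact that an integer and its powers share the same parity. The crucial observation is that, since every nonnegative integer is congruent to $0$ modulo $1$, the row sum collapses to the full coefficient sum
\begin{align*}
  {k\brack 0}_{1,f} = \sum_{n\ge 0}\binom{k}{n}_f.
\end{align*}
I would then evaluate this sum in closed form by substituting $x=1$ into the generating function from \eqref{eq:power}.

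Concretely, recall from Theorem \ref{theorem:main} that $\sum_{n\ge 0}\binom{k}{n}_f x^n = \bigl(\sum_{s\in\nn}f(s)x^s\bigr)^k$. Because $f(s)=0$ for almost all $s$, the inner sum is a genuine polynomial, so setting $x=1$ is legitimate and yields the finite value $\bigl(\sum_{s\in\nn}f(s)\bigr)^k = M^k$. Hence ${k\brack 0}_{1,f}=M^k$. As an alternative route to the same identity, one can argue by induction on $k$ using the Vandermonde recurrence \eqref{eq:vand_sum}: modulo $1$ all residue classes coincide, so ${k\brack 0}_{1,f}=\sum_{s\ge 0}f(s){k-1\brack 0}_{1,f}=M\cdot{k-1\brack 0}_{1,f}$, with base case ${1\brack 0}_{1,f}=\sum_{s\ge 0}f(s)=M$ furnished by Lemma \ref{lemma:f0}.

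It then remains only to observe that $M^k\equiv M\pmod 2$ for every $k\ge 1$: if $M$ is even then both sides vanish modulo $2$, and if $M$ is odd then both sides are odd. Combining this with ${k\brack 0}_{1,f}=M^k$ gives the theorem. There is no genuine obstacle here; the entire content is the closed-form evaluation ${k\brack 0}_{1,f}=M^k$, after which the parity statement is immediate. The hypothesis that $f$ vanishes almost everywhere is used only to guarantee that $M$ is finite, so that the row sum and the quantity $M^k$ are well-defined integers.
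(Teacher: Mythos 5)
Your proof is correct and follows essentially the same route as the paper: the paper's (terser) proof likewise substitutes $x=1$ into the generating function $\bigl(\sum_{s}f(s)x^s\bigr)^k=\sum_n\binom{k}{n}_fx^n$, working over $\z/2\z$, to get ${k\brack 0}_{1,f}\equiv M^k\equiv M\pmod 2$. Your write-up simply makes explicit the steps (finiteness of $M$, the identity ${k\brack 0}_{1,f}=M^k$, and $M^k\equiv M\pmod 2$) that the paper leaves to the reader.
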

\begin{proof}
  Consider the equation $(\sum_{s\in\nn}f(s)x^s)^k=\sum_{n\ge 0}\binom{k}{n}_{f}x^n$ over $\z/p\z$. Plug in $x=[1]\in\z/2\z$. 
\end{proof}
\begin{remark}
  Note that the previous theorem generalizes the fact that the number
  of odd entries in row $k$ in Pascal's triangle is a multiple of $2$.
\end{remark}
\begin{example}
  In the triangle in Remark \ref{rem:triangle}, note that
  $M=\sum_{s\ge 0} f(s)=5+0+2+1=8$, so that every row sum in the
  triangle (except the
  first) must be even. 
\end{example}

\section{Applications: Prime criteria}\label{sec:applications}
We conclude with two prime criteria for weighted integer
compositions, or, equivalently, extended binomial
coefficients. Babbage's prime criterion (see Granville \cite{Granville:1997} for
references) for ordinary binomial 
coefficients states that an integer $n$ is prime if and only if
$\binom{n+m}{n}\equiv 1\pmod{n}$ for all integers $m$ satisfying $0\le
m\le n-1$. 
The sufficiency of this criterion critically depends on the fact that
the entries $\binom{p}{r}$ in the $p$-th row in Pascal's triangle
are equal to $0$ or $1$ modulo $p$ and the fact that, for ordinary binomial
coefficients, $f(s)=0$ for all $s>1$. 
Hence, this criterion is not expected to 
hold for arbitrary $f$. Indeed,
if $n$ is prime, then, for example, $\binom{n+1}{n}_f\equiv
f(0)f(1)+f(n)f(0)\pmod{n}$ by Corollary \ref{cor:pplus1},  and then,
by repeated application of the corollary and the Vandermonde
convolution,  
$\binom{n+2}{n}_f \equiv f(0)\left(f(0)f(1)+\sum_{i\ge
  0}f(i)f(n-i)\right)\pmod{n}$, etc.\ ---  
and it seems also not obvious how to 
generalize the criterion. 

Conversely, Mann and Shanks' \cite{Mann:1972}
prime criterion allows a straightforward generalization to
weighted integer compositions. We state the criterion and sketch a
proof. 
\begin{theorem}
  Let $f(0)=f(1)=1$. Then, an integer $n>1$ is prime if and only if
  $m$ divides $\binom{m}{n-2m}_f$ for all integers $m$ with $0\le 2m\le
  n$.
\end{theorem}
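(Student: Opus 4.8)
The plan is to prove the two implications separately, and in both directions the governing observation is that $\binom{m}{n-2m}_f$ interacts with $m$ only through the common factors of $m$ and $n$. The cleanest way to make this precise is Theorem \ref{theorem:divis}: taking its ``$k$'' to be $m$ and its multiplier to be $1$ gives $\binom{m}{n-2m}_f\equiv 0\pmod{m/\gcd(m,n-2m)}$, and since $\gcd(m,n-2m)=\gcd(m,n)$ this says $\binom{m}{n-2m}_f\equiv 0\pmod{m/\gcd(m,n)}$. Hence whenever $\gcd(m,n)=1$ the divisibility $m\mid\binom{m}{n-2m}_f$ holds automatically, for every $f$; only rows $m$ sharing a factor with $n$ can ever fail it.

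For necessity, suppose $n$ is prime and $0\le 2m\le n$. If $m=0$ then $\binom{0}{n}_f=0$ by Lemma \ref{lemma:f0} (as $n>1$), and $0\mid 0$. If $m\ge 1$ then $m\le n/2<n$, so $\gcd(m,n)=1$ because $n$ is prime, and the observation above gives $m\mid\binom{m}{n-2m}_f$ at once. This disposes of the forward direction for every admissible $f$.

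For sufficiency I would argue the contrapositive: if $n$ is composite I must exhibit a witness $m$ with $m\nmid\binom{m}{n-2m}_f$. By the previous paragraph any witness has $\gcd(m,n)>1$, so I fix a prime $p\mid n$ and look only at rows $m=p\ell$. Here Lemma \ref{lemma:comb} supplies the reduction: let $p$ act on the $f$-weighted compositions of $n-2m$ into $m=p\ell$ parts by the cyclic shift through $\ell$ positions, which has order $p$ (its $p$-fold iterate is the shift through $p\ell=m$ positions, the identity). The shift splits the $m$ positions into $\ell$ orbits of size $p$, so its fixed points are exactly the compositions built from an $\ell$-part block repeated $p$ times; these exist only when $p\mid(n-2m)$, equivalently (as $p\mid 2m$) when $p\mid n$, and are then in bijection with the $f$-weighted compositions of $(n-2m)/p$ into $\ell$ parts. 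Lemma \ref{lemma:comb} therefore yields $\binom{p\ell}{n-2p\ell}_f\equiv\binom{\ell}{(n/p)-2\ell}_f\pmod p$, so the row $m=p\ell$ is a witness as soon as this ``column $n/p$'' entry is nonzero modulo $p$, since then $p\mid m$ but $p\nmid\binom{m}{n-2m}_f$.

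It remains to produce such a surviving entry. If $n$ is even I would take $m=n/2$ directly, since $\binom{n/2}{0}_f=f(0)^{n/2}=1$ is not divisible by $n/2>1$ and no reduction is needed; if $n=qs$ is odd with $q\mid n$ prime and $s=n/q$ odd, the topmost column-$s$ entry is $\binom{(s-1)/2}{1}_f=(s-1)/2$, which is nonzero modulo $q$ exactly when $s\not\equiv 1\pmod q$, giving the witness $m=q(s-1)/2=(n-q)/2$. The hard part is the residual odd case where $n/q\equiv 1\pmod q$ for \emph{every} prime $q\mid n$ at once, so that all the top entries vanish and one must locate a lower column-$(n/q)$ entry that survives modulo some $q$; this is precisely the configuration of an odd Giuga-type number, and it is the only delicate point of the proof. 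To finish I would follow the combinatorial bookkeeping of Mann and Shanks \cite{Mann:1972}, applied to the reduction above, to guarantee for each composite $n$ at least one column-$(n/p)$ entry that is a unit modulo $p$; the two divisibility inputs and the cyclic-shift bijection are otherwise routine.
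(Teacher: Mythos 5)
Your forward direction is exactly the paper's: Theorem \ref{theorem:divis} with $\gcd(m,n-2m)=\gcd(m,n)=1$, and it is correct. Your cyclic-shift reduction $\binom{p\ell}{n-2p\ell}_f\equiv\binom{\ell}{(n/p)-2\ell}_f\pmod p$ via Lemma \ref{lemma:comb} is also correct and is a nice generalization of Theorem \ref{theorem:prime2}. The problem is that the sufficiency argument built on it cannot be completed. A mod-$p$ certificate of non-divisibility requires some $\ell$ with $\binom{\ell}{(n/p)-2\ell}_f\not\equiv 0\pmod p$, but for $(n/p)-2\ell\ge 2$ this quantity involves the values $f(2),f(3),\ldots$, which the theorem leaves completely arbitrary (only $f(0)=f(1)=1$ is assumed); an adversarial $f$ can make every such lower entry vanish modulo $p$. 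So the only entry you can actually control is the top one, $\binom{(n/p-1)/2}{1}_f=(n/p-1)/2$, and it is $\equiv 0\pmod p$ precisely when $p\mid(n/p-1)$. Requiring this simultaneously for every prime divisor is the Giuga condition, and --- as you yourself note --- the nonexistence of odd numbers satisfying it is an open problem. You therefore cannot dismiss the residual case, and ``following Mann and Shanks' bookkeeping'' does not rescue it, because their argument is not a mod-$p$ argument at all.

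The paper closes exactly this hole by working modulo $m=pr$ rather than modulo $p$. With $m=(n-p)/2=pr$ one has $n-2m=p$, and Theorem \ref{theorem:ms} gives $\binom{pr}{p}_f\equiv f(0)^{p(r-1)}f(1)^p\binom{pr}{p}=\binom{pr}{p}\pmod{pr}$: modulo $pr$ (not merely modulo $p$) every contribution involving a part of size $\ge 2$ disappears, so the arbitrariness of $f(2),f(3),\ldots$ is irrelevant. The remaining input is Mann and Shanks' elementary fact that $pr\nmid\binom{pr}{p}$, which follows from $\binom{pr}{p}=r\binom{pr-1}{p-1}$ and $\binom{pr-1}{p-1}\equiv(-1)^{p-1}\equiv 1\pmod p$; note this works whether or not $p\mid r$, i.e., it covers your residual case. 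To repair your proof you should replace the cyclic-shift step in the odd composite case by an appeal to Theorem \ref{theorem:ms} (or reprove that congruence via \eqref{eq:repr} and \eqref{eq:multi} as the paper does), keeping your treatment of the even case and of the forward direction as is.
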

\begin{proof}[Proof sketch]
If $n$ is prime, then by Theorem \ref{theorem:divis},
$\binom{m}{n-2m}_f\equiv 0\pmod{\frac{m}{\gcd(m,n-2m)}}$. Since $m<n$
and $n$ is prime, then $\gcd(m,n-2m)=\gcd(m,n)=1$. 

Conversely, 
if $n$ is not prime, then, if $n$ is even,
$\binom{n/2}{0}_f=f(0)^{n/2}=1$ and so $m=n/2$ does not divide
$\binom{m}{n-2m}_f$. If $n$ is odd and composite, let $p$ be a prime
divisor of $n$ and choose $m=(n-p)/2=pr$, for a positive integer
$r$. Thus, $\binom{m}{n-2m}_f = \binom{pr}{p}_f$, and by Theorem
\ref{theorem:ms}, $\binom{pr}{p}_f \equiv \binom{pr}{p}\pmod{pr}$ under
the outlined conditions on $f$. Then, Mann and Shanks show that
$\binom{pr}{p}\not\equiv 0\pmod{pr}$.  
\end{proof}
In an earlier work \cite{Eger:2014}, we have derived all steps of the last theorem via application of 
\eqref{eq:repr}. 
\begin{example}
  Let $f(0)=1$, and $f(s)=s$ for all $s\ge 1$. Then, as a primality
  test, e.g., for the integer
  $n=5$, the theorem demands to consider whether $0\mid\binom{0}{5}_f=0$,
  $1\mid\binom{1}{3}_f=3$, and $2\mid\binom{2}{1}_f=2$
  hold true (clearly, the first two of these tests are
  unnecessary). Similarly, the primality test for $n=6$ would be to consider
  whether 
  $0\mid\binom{0}{6}_f=0$, $1\mid\binom{1}{4}_f=4$, $2\mid\binom{2}{2}_f=5$,
  and $3\mid\binom{3}{0}_f=1$ hold true.

  As Mann and Shanks \cite{Mann:1972} point out, the theorem is mainly
  of theoretical rather than practical interest since to determine
  whether the involved row numbers divide the respective binomial
  coefficients may require similarly many computations as in a
  primality test based on Wilson's
  theorem. Also, for practical purposes, one would always want to
  apply the theorem in the setting of ordinary binomial coefficients
  ($f(s)=0$ for all $s>1$).  
\end{example}

\bibliography{lit}{}
\bibliographystyle{plain}

\medskip

\bigskip
\hrule
\bigskip

\noindent 2010 {\it Mathematics Subject Classification}:
Primary 05A10; Secondary 05A17, 11P83, 11A07.

\noindent \emph{Keywords: } 
integer composition, weighted integer composition, 
colored integer composition, 
divisibility, extended binomial coefficient, congruence.

\bigskip
\hrule
\bigskip

\noindent (Concerned with sequences
\seqnum{A007318}, and
\seqnum{A027907}.)

\end{document}